\tikzset{vertex/.style={circle,draw,fill,inner sep=0pt,minimum size=1mm}}
\theoremstyle{plain}
\newtheorem{thm}{Theorem}
\newtheorem{lem}[thm]{Lemma}
\newtheorem{prop}[thm]{Proposition}
\newtheorem{cor}[thm]{Corollary}
\theoremstyle{definition}
\newtheorem{defn}[thm]{Definition}
\newtheorem{exl}[thm]{Example}
\newtheorem{quest}[thm]{Question}
\numberwithin{thm}{section}
\newcommand{\adj}{\leftrightarrow}
\newcommand{\adjeq}{\leftrightarroweq}
\DeclareMathOperator{\im}{im}
\DeclareMathOperator{\id}{id}
\newcommand{\pd}{\partial}
\DeclareMathOperator{\NP}{NP}
\def\R{{\mathbb R}}
\newcommand{\Z}{\mathbb{Z}}
\begin{document}

\title{Strong homotopy of digitally continuous functions}
\author{P. Christopher Staecker}

\maketitle

\begin{abstract}
We introduce a new type of homotopy relation for digitally continuous functions which we call ``strong homotopy.'' Both digital homotopy and strong homotopy are natural digitizations of classical topological homotopy: the difference between them is analogous to the difference between digital 4-adjacency and 8-adjacency in the plane.

We explore basic properties of strong homotopy, and give some equivalent characterizations. In particular we show that strong homotopy is related to ``punctuated homotopy,'' in which the function changes by only one point in each homotopy time step.

We also show that strongly homotopic maps always have the same induced homomorphisms in the digital homology theory. This is not generally true for digitally homotopic maps, though we do show that it is true for any homotopic selfmaps on the digital cycle $C_n$ with $n\ge 4$.

We also define and consider strong homotopy equivalence of digital images. Using some computer assistance, we produce a catalog of all small digital images up to strong homotopy equivalence. We also briefly consider pointed strong homotopy equivalence, and give an example of a pointed contractible image which is not pointed strongly contractible.
\end{abstract}

\section{Introduction}
A \emph{digital image} is a set of points $X$, with some \emph{adjacency relation} $\kappa$, which is symmetric and antireflexive. The standard notation for such a digital image is $(X, \kappa)$. Typically in digital topology the set $X$ is a finite subset of $\Z^n$, and the adjacency relation is based on some notion of adjacency of points in the integer lattice. 

We will use the notation $x \adj_\kappa y$ when $x$ is adjacent to $y$ by the adjacency $\kappa$, and $x \adjeq_\kappa y$ when $x$ is adjacent or equal to $y$. The particular adjacency relation will usually be clear from context, and in this case we will omit the subscript.

The results of this paper hold generally, without any specific reference to the embedding of the images in $\Z^n$. Thus we will often simply consider a digital image $X$ as an abstract simple graph, where the vertices are points of $X$, and an edge connects two vertices $x, x' \in X$ whenever $x\adj x'$. 

\begin{defn} 
Let $(X, \kappa), (Y, \lambda)$ be digital images. A function $f: X \rightarrow Y$ is $(\kappa, \lambda)$-continuous iff whenever $x \adj_{\kappa} y$ then $f(x) \adjeq_{\lambda} f(y)$. 
\end{defn}

For simplicity of notation, we will generally not need to reference the adjacency relation specifically. Thus we typically will denote a digital image simply by $X$, and when the appropriate adjacency relations are clear we simply call a function between digital images ``continuous''. We will often refer to digital images as simply ``images''.

For any digital image $X$ the identity map $\id_X:X \to X$ is always continuous.

The topological theory of digital images has, to a large part, been characterized by taking ideas from classical topology and ``discretizing'' them. Typically $\R^n$ is replaced by $\Z^n$, and so on. 
Viewing $\Z$ as a digital image, it makes sense to use the following adjacency relation: $a,b\in \Z$ are called \emph{$c_1$-adjacent} if and only if $|a-b|=1$. This adjacency relation corresponds to connectivity in the standard topology of $\R$. (This adjacency is called $c_1$ because it is the first in a class of standard adjacencies on $\Z^n$, the notation first appears in \cite{han05}.)

The following is the standard definition of digital homotopy. For $a,b\in \Z$ with $a\le b$, let $[a,b]_\Z$ be the \emph{digital interval} $[a,b]_\Z = \{a, a+1, \dots, b\}$.

\begin{defn}\cite{boxe99}
\label{homotopydef}
Let $(X,\kappa)$ and $(Y,\lambda)$ be digital images.
Let $f,g: X \rightarrow Y$ be $(\kappa,\lambda)$-continuous functions.
Suppose there is a positive integer $m$ and a function
$H: X \times [0,k]_{\Z} \rightarrow Y$
such that:

\begin{itemize}
\item for all $x \in X$, $H(x,0) = f(x)$ and $H(x,k) = g(x)$;
\item for all $x \in X$, the induced function
      $H_x: [0,k]_{\Z} \rightarrow Y$ defined by
          \[ H_x(t) ~=~ H(x,t) \mbox{ for all } t \in [0,k]_{\Z} \]
          is $(c_1,\lambda)-$continuous. 
\item for all $t \in [0,k]_{\Z}$, the induced function
         $H_t: X \rightarrow Y$ defined by
          \[ H_t(x) ~=~ H(x,t) \mbox{ for all } x \in  X \]
          is $(\kappa,\lambda)-$continuous.
\end{itemize}
Then $H$ is a {\em [digital] homotopy between} $f$ and
$g$, and $f$ and $g$ are {\em [digitally] homotopic}, denoted
$f \simeq g$.

If $k=1$, then $f$ and $g$ are {\em homotopic in one step}.
\end{defn}

Homotopy in one step can easily be expressed in terms of individual adjacencies:
\begin{prop}\label{htpadjs}
Let $f,g:X\to Y$ be continuous. Then $f$ is homotopic to $g$ in one step if and only if for every $x\in X$, we have $f(x)\adjeq g(x)$.
\end{prop}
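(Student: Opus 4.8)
The plan is to unwind Definition~\ref{homotopydef} in the special case $k=1$ and observe that the three bullet conditions collapse to exactly the pointwise statement $f(x)\adjeq g(x)$. In other words, the proposition is just a translation lemma, so the work consists entirely of carefully matching up the two formulations.

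For the forward implication, suppose $H:X\times[0,1]_\Z\to Y$ is a homotopy from $f$ to $g$ in one step. Fix $x\in X$. The second bullet gives that the induced map $H_x:[0,1]_\Z\to Y$ is $(c_1,\lambda)$-continuous, and since $0\adj_{c_1}1$ in the two-point interval $[0,1]_\Z$ this forces $H_x(0)\adjeq_\lambda H_x(1)$; by the first bullet $H_x(0)=f(x)$ and $H_x(1)=g(x)$, so $f(x)\adjeq g(x)$, as desired.

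For the reverse implication, assume $f(x)\adjeq g(x)$ for all $x\in X$ and define $H:X\times[0,1]_\Z\to Y$ by $H(x,0)=f(x)$ and $H(x,1)=g(x)$. I then check the three conditions of Definition~\ref{homotopydef}: the endpoint condition holds by construction; the slice $H_t$ equals $f$ when $t=0$ and $g$ when $t=1$, both $(\kappa,\lambda)$-continuous by hypothesis; and for each $x$ the path $H_x$ sends $0\mapsto f(x)$ and $1\mapsto g(x)$, which is $(c_1,\lambda)$-continuous precisely because $f(x)\adjeq g(x)$ (the only adjacency to check being $0\adj 1$). Hence $H$ is a one-step homotopy from $f$ to $g$.

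There is essentially no obstacle here: the only point requiring any care is the observation, used in both directions, that a $(c_1,\lambda)$-continuous function out of the two-point digital interval $[0,1]_\Z$ is the same data as an ordered pair of $\adjeq_\lambda$-related points of $Y$, so that the existence of the ``path'' in the definition is equivalent to the adjacency condition on $f(x)$ and $g(x)$.
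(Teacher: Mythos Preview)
Your proof is correct and follows essentially the same approach as the paper's own proof: both directions unwind Definition~\ref{homotopydef} with $k=1$, using continuity in $t$ for the forward implication and checking the three bullets directly for the converse. The only difference is expository---you explicitly frame it as a translation lemma and spell out the observation about $(c_1,\lambda)$-continuous maps on $[0,1]_\Z$---but the mathematical content is identical.
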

\begin{proof}
Assume that $f$ is homotopic to $g$ in one step. The homotopy is simply defined by $H(x,0)=f(x)$ and $H(x,1)=g(x)$. Then by continuity in the second coordinate of $H$, we have $f(x) = H(x,0) \adjeq H(x,1) = g(x)$ as desired.

Now assume that $f(x) \adjeq g(x)$ for each $x$. Define $H:X\times [0,1]_\Z \to Y$ by $H(x,0)=f(x)$ and $H(x,1)=g(x)$. Clearly $H(x,t)$ is continuous in $x$ for each fixed $t$, since $f$ and $g$ are continuous. Also $H(x,t)$ is continuous in $t$ for fixed $x$ because $H(x,0) = f(x) \adjeq g(x) = H(x,1)$. Thus $H$ is a one step homotopy from $f$ to $g$ as desired.
\end{proof}

Definition \ref{homotopydef} is inspired by the concept of homotopy from classical topology, 
but the classical definition is simpler because it can use the product topology. 
In classical topology the continuity of the two types of ``induced function'' is expressed simply by saying that $H:X\times [0,1] \to Y$ is continuous with respect to the product topology on $X \times [0,1]$.

Given two digital images $A$ and $B$, we can consider the product $A\times B$ as a digital image, but there are several choices for the adjacency to be used. The most natural adjacencies are the \emph{normal product adjacencies}, which were defined by Boxer in \cite{boxe17}. 
\begin{defn}\cite{boxe17}
For each $i\in \{1,\dots,n\}$, let $(X_i,\kappa_i)$ be digital images. Then for some $u \in \{1,\dots,n\}$, the \emph{normal product adjacency} $\NP_u(\kappa_1,\dots,\kappa_n)$ is the adjacency relation on $\prod_{i=1}^n X_i$ defined by: $(x_1,\dots, x_n)$ and $(x'_1,\dots, x'_n)$ are adjacent if and only if their coordinates are adjacent in at most $u$ positions, and equal in all other positions.

When the underlying adjacencies are clear, we abbreviate $\NP_u(\kappa_1,\dots,\kappa_n)$ as simply $\NP_u$.
\end{defn}

The normal product adjacency is inspired by the various standard adjacencies typically used on $\Z^n$. On $\Z^1$, as mentioned above, the standard adjacency is $c_1$. On $\Z^2$, the typical adjacencies are \emph{4-adjacency}, in which each point is adjacent to its 4 horizontal and vertical neighbors, and \emph{8-adjacency}, in which each point is additionally adjacent to its diagonal neighbors. Viewing $\Z^2$ as the product $\Z^2 = \Z \times \Z$, it is easy to see that 4-adjacency is the same as $\NP_1(c_1,c_1)$, and 8-adjacency is $\NP_2(c_1,c_1)$. The typical adjacencies used in $\Z^3$ are 6-, 18-, and 26-adjacency, depending on which types of diagonal adjacencies are allowed. These adjacencies are exactly $\NP_1(c_1,c_1,c_1)$, $\NP_2(c_1,c_1,c_1)$, and $\NP_3(c_1,c_1,c_1)$. 

Boxer showed that the definition of homotopy can be rephrased in terms of an $\NP_1$ product adjacency:
\begin{thm}\cite[Theorem 3.6]{boxe17}\label{np1thm}
Let $(X,\kappa)$ and $(Y,\lambda)$ be digital images. Then $H:X\times [0,k]_\Z \to Y$ is a homotopy if and only if $H$ is $(\NP_1(\kappa,c_1),\lambda)$-continuous.
\end{thm}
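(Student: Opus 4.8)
The plan is to prove both implications by directly unwinding the definition of $\NP_1(\kappa,c_1)$-continuity and matching it against the three bullet conditions of Definition \ref{homotopydef}. The key observation is that, because adjacency relations are antireflexive, two \emph{distinct} points $(x,t)$ and $(x',t')$ of $X\times[0,k]_\Z$ can be $\NP_1(\kappa,c_1)$-adjacent in exactly one of two mutually exclusive ways: either $x=x'$ and $t\adj_{c_1}t'$, or $t=t'$ and $x\adj_\kappa x'$. This dichotomy is precisely what corresponds to the two ``induced function'' continuity conditions in the homotopy definition, so the proof is essentially bookkeeping once this is noted.

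For the forward direction, I would assume $H$ is a homotopy and take an arbitrary adjacent pair $(x,t)\adj_{\NP_1}(x',t')$, splitting into the two cases above. In the first case $H(x,t)=H_x(t)$ and $H(x',t')=H_x(t')$ with $t\adj_{c_1}t'$, so $(c_1,\lambda)$-continuity of $H_x$ gives $H(x,t)\adjeq_\lambda H(x',t')$. In the second case $H(x,t)=H_t(x)$ and $H(x',t')=H_t(x')$ with $x\adj_\kappa x'$, so $(\kappa,\lambda)$-continuity of $H_t$ gives the same conclusion. Hence $H$ is $(\NP_1(\kappa,c_1),\lambda)$-continuous.

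For the converse, I would assume $H$ is $(\NP_1(\kappa,c_1),\lambda)$-continuous and put $f=H_0$, $g=H_k$, so that the first bullet holds by construction. To verify the second bullet, fix $x$ and suppose $t\adj_{c_1}t'$; then $(x,t)$ and $(x,t')$ agree in the first coordinate and are $c_1$-adjacent in the second, hence $\NP_1$-adjacent, so $H_x(t)=H(x,t)\adjeq_\lambda H(x,t')=H_x(t')$, giving continuity of $H_x$. Symmetrically, fixing $t$ and taking $x\adj_\kappa x'$ shows $H_t$ is $(\kappa,\lambda)$-continuous, which is the third bullet. Thus $H$ is a homotopy.

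There is no substantial obstacle in this argument; the only point requiring care is the ``at most $u$ positions'' clause in the definition of $\NP_u$, which for $u=1$ would formally also allow the two points to agree in both coordinates — but this case is vacuous since adjacency is antireflexive and such points would be equal. Once that is observed, the two-case analysis is exhaustive and the equivalence follows by the unwinding above.
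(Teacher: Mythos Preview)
Your argument is correct. The paper does not supply its own proof of this statement: it is quoted as \cite[Theorem 3.6]{boxe17} and used as a black box, so there is nothing in the present paper to compare against. Your direct unwinding of the $\NP_1$ definition into the two cases ``same $x$, adjacent $t$'' and ``same $t$, adjacent $x$'' is exactly the natural route, and the antireflexivity remark correctly disposes of the degenerate ``equal in both coordinates'' possibility.
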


In this paper we will propose a new type of homotopy relation and explore its properties. In Section \ref{strongsection} we define strong homotopy, which is our main object of study. In Section \ref{punctuatedsec} we define punctuated homotopy, in which the function changes by only one point for each time step, and we show that functions are strong homotopic if and only if they are punctuated homotopic. In Section \ref{homologysec} we discuss the digital homology groups, and show that they are well-behaved with respect to strong homotopy and strong homotopy equivalence. In Section \ref{catalogsec} we produce a catalog of small digital images up to strong homotopy equivalence, inspired by the work of \cite{hmps15}. In Section \ref{pointedsec} we consider pointed strong homotopy equivalence, and show that it is not always equivalent to nonpointed strong homotopy equivalence.

\section{Strong homotopy}\label{strongsection}

Theorem \ref{np1thm} states that a homotopy is a function $H:X\times [0,k]_\Z \to Y$ which is continuous when we use the $\NP_1$ product adjacency in the domain. In this paper we explore the following definition, which simply uses $\NP_2$ in place of $\NP_1$. As we will see this imposes extra restrictions on the homotopy $H$.

\begin{defn}
Let $(X,\kappa)$ and $(Y,\lambda)$ be digital images. We say $H:X\times [0,k]_\Z \to Y$ is a \emph{strong homotopy} when $H$ is $(\NP_2(\kappa,c_1), \lambda)$-continuous.

If there is a strong homotopy $H$ between $f$ and $g$, we say $f$ and $g$ are \emph{strongly homotopic}, and we write $f \cong g$. If additionally $k=1$, we say $f$ and $g$ are \emph{strongly homotopic in one step}.
\end{defn}

Since we have exchanged $NP_1$ for $NP_2$ in the definition above, we may say informally that strong homotopy and digital homotopy provide two different but equally natural ``digitizations'' of the classical topological idea of homotopy. The difference between homotopy and strong homotopy of continuous functions is analogous to the difference between 4-adjacency and 8-adjacency of points in the plane.

It is clear from the definition of the normal product adjacency that if two points are $\NP_1$-adjacent, then they are $\NP_2$-adjacent. Thus any function $f:A\times B \to C$ which is continuous when using $\NP_2$ in the domain will automatically be continuous when using $\NP_1$ in the domain. Thus we obtain the following, which justifies the use of the word ``strong.''
\begin{thm}\label{strongimplieshtp}
Let $H:X\times [0,k]_\Z \to Y$ be a strong homotopy. Then $H$ is a homotopy.
\end{thm}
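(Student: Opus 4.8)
The plan is to derive this as an immediate consequence of the definitions together with Theorem~\ref{np1thm}. The key observation, already noted in the paragraph preceding the statement, is that $\NP_1$-adjacency implies $\NP_2$-adjacency: if $(x_1,x_2)$ and $(x_1',x_2')$ differ (by adjacency) in at most one coordinate and agree elsewhere, then in particular they differ in at most two coordinates and agree elsewhere. Applied to the product $X\times[0,k]_\Z$ with adjacencies $\kappa$ on $X$ and $c_1$ on $[0,k]_\Z$, this says every $\NP_1(\kappa,c_1)$-adjacent pair is $\NP_2(\kappa,c_1)$-adjacent.

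From here the argument is a one-line implication about continuity. Suppose $H\colon X\times[0,k]_\Z\to Y$ is a strong homotopy, i.e.\ $H$ is $(\NP_2(\kappa,c_1),\lambda)$-continuous. Take any pair of points in $X\times[0,k]_\Z$ which are $\NP_1(\kappa,c_1)$-adjacent. By the observation above they are also $\NP_2(\kappa,c_1)$-adjacent, so by $\NP_2$-continuity of $H$ their images are $\lambda$-adjacent-or-equal. Since this holds for every $\NP_1$-adjacent pair, $H$ is $(\NP_1(\kappa,c_1),\lambda)$-continuous. By Theorem~\ref{np1thm}, $H$ is then a homotopy, which is exactly the claim.

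There is no real obstacle here; the only thing worth being careful about is making the adjacency-implication explicit at the level of the $\NP_u$ definition (``adjacent in at most one position'' is a special case of ``adjacent in at most two positions,'' with the equality conditions in the remaining positions being identical). I would state that as a small displayed observation or inline remark, cite the definition of $\NP_u$, and then invoke Theorem~\ref{np1thm} to finish. If one wanted to avoid even citing Theorem~\ref{np1thm}, one could instead unpack the strong-homotopy condition directly into the three bullet points of Definition~\ref{homotopydef} by restricting $H$ to ``horizontal'' slices (fixed $t$) and ``vertical'' slices (fixed $x$) and checking $(c_1,\lambda)$- and $(\kappa,\lambda)$-continuity on each, but routing through Theorem~\ref{np1thm} is cleaner and is the natural choice given it has just been quoted.
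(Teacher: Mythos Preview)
Your proposal is correct and matches the paper's own argument exactly: the paper states (in the paragraph immediately preceding the theorem) that $\NP_1$-adjacency implies $\NP_2$-adjacency, so $\NP_2$-continuity implies $\NP_1$-continuity, and then the result follows from Theorem~\ref{np1thm}. No separate proof is given in the paper beyond that remark, so your write-up is, if anything, slightly more detailed than what appears there.
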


A standard argument shows that strong homotopy is an equivalence relation.
\begin{thm}
Strong homotopy is an equivalence relation.
\end{thm}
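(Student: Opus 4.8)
The plan is to verify the three defining properties of an equivalence relation --- reflexivity, symmetry, and transitivity --- for the relation $\cong$ on the set of continuous functions $X\to Y$. For reflexivity, given any continuous $f:X\to Y$, I would take $k=1$ and define $H:X\times[0,1]_\Z\to Y$ by $H(x,t)=f(x)$; one checks that if $(x,t)\adj_{\NP_2(\kappa,c_1)}(x',t')$ then the images $f(x)$ and $f(x')$ satisfy $f(x)\adjeq_\lambda f(x')$ because either $x=x'$ or $x\adj_\kappa x'$, so $H$ is a strong homotopy from $f$ to itself.

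For symmetry, suppose $H:X\times[0,k]_\Z\to Y$ is a strong homotopy from $f$ to $g$. I would define the time-reversed map $\bar H(x,t)=H(x,k-t)$ and observe that the map $(x,t)\mapsto(x,k-t)$ is an isomorphism of digital images $X\times[0,k]_\Z\to X\times[0,k]_\Z$ with the $\NP_2(\kappa,c_1)$ adjacency, since $t\mapsto k-t$ is a $c_1$-isomorphism of $[0,k]_\Z$. Composing this isomorphism with the continuous map $H$ yields a continuous map $\bar H$, which is then a strong homotopy from $g$ to $f$.

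For transitivity, suppose $H:X\times[0,k]_\Z\to Y$ is a strong homotopy from $f$ to $g$ and $H':X\times[0,k']_\Z\to Y$ is a strong homotopy from $g$ to $h$. I would define the concatenation $H'':X\times[0,k+k']_\Z\to Y$ by $H''(x,t)=H(x,t)$ for $t\le k$ and $H''(x,t)=H'(x,t-k)$ for $t\ge k$ (these agree at $t=k$ since both equal $g(x)$). To see $H''$ is $(\NP_2(\kappa,c_1),\lambda)$-continuous, take adjacent $(x,t)\adj(x',t')$ in the domain; since $|t-t'|\le 1$, either both $t,t'\le k$ or both $t,t'\ge k$, so the adjacency is witnessed entirely within one of the two pieces, where continuity of $H$ or $H'$ applies. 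This matches the standard concatenation argument used for digital homotopy.

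I do not expect any genuine obstacle here --- the statement is, as the excerpt says, a ``standard argument,'' and each verification is a short check that the relevant map is $\NP_2$-continuous. The only point requiring a little care is the continuity check for the concatenation in the transitivity step, where one must confirm that no adjacency in the product straddles the gluing time $t=k$ in a way that escapes both pieces; this is immediate from the fact that $c_1$-adjacent (or equal) integers differ by at most $1$. One could alternatively package all three parts by noting that strong homotopy in one step generates $\cong$ and invoking Proposition~\ref{htpadjs}-style reasoning, but the direct argument above is cleanest.
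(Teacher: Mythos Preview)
Your proposal is correct and follows essentially the same approach as the paper's proof: reflexivity via a constant-in-time homotopy, symmetry via time reversal (viewed as composing with an $\NP_2$-isomorphism), and transitivity via concatenation with a case check on adjacent time coordinates. The only cosmetic difference is that the paper concatenates on $[0,k+l+1]_\Z$ by repeating $g$ at consecutive times $k$ and $k+1$, whereas you glue directly at time $k$ on $[0,k+k']_\Z$; your observation that $|t-t'|\le 1$ forces both coordinates to lie in a common piece makes the verification equally clean.
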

\begin{proof}
For reflexivity, given any continuous map $f:(X,\kappa)\to (Y,\lambda)$, the function $H:X\times [0,1]_\Z \to Y$ given by $H(x,t) =f(x)$ is a strong homotopy from $f$ to itself.

For symmetry, let $f\cong g$ by some strong homotopy $H:X\times [0,k]_\Z \to Y$ with $H(x,0)=f(x)$ and $H(x,k) =g(x)$. Let $r:[0,k]_\Z \to [0,k]_\Z$ be $r(t) = k-t$. Then $r$ is a $c_1$-isomorphism, and thus $\id_X \times r:X \times [0,k]_\Z \to X \times [0,k]_\Z$ is an $\NP_2(\kappa,c_1)$-isomorphism. (Theorem 4.1 of \cite{boxe17} shows that any normal product of functions is an isomorphism if and only if each factor map is an isomorphism.) Then $H \circ (\id_X \times r): X\times [0,k]_\Z \to Y$ is $(\NP_2(\kappa,c_1),\lambda)$-continuous, and thus a strong homotopy from $g$ to $f$.

For transitivity, let $f \cong g$ by some $H:X\times [0,k]_\Z \to Y$ and $g \cong h$ by $G:X\times [0,l]_\Z \to Y$. Then let $\bar H:X\times [0,k+l+1]_\Z \to Y$ be given by:
\[ \bar H(x,t) = \begin{cases} H(x,t) \quad &\text{ if } t \le k, \\
G(x,t-k-1) &\text{ if } t>k. \end{cases} \]
We need only show that $\bar H$ is $(\NP_2(\kappa,c_1),\lambda)$-continuous, and then it will be a strong homotopy from $f$ to $h$.

Take $(x,t) \adj_{\NP_2} (x',t')$, and we will show that $\bar H(x,t) \adjeq_\lambda \bar H(x',t')$. If $t$ and $t'$ are both less than or equal to $k$, then $\bar H(x,t) = H(x,t)$ and $\bar H(x',t')=H(x',t')$, and so $\bar H(x,t) \adjeq_\lambda \bar H(x',t')$ because $H$ is $(\NP_2(\kappa,c_1),\lambda)$-continuous. Similarly $\bar H(x,t) \adjeq_\lambda \bar H(x',t')$ when both $t$ and $t'$ are greater than $k$.

Without loss of generality, it remains only to consider when $t\le k$ and $t' > k$. Since $(x,t) \adj_{\NP_2} (x',t')$ we must have $t\adjeq_{c_1} t'$ and thus we must have $t=k$ and $t'=k+1$. Then we have $\bar H(x,t) = \bar H(x,k) = H(x,k) = g(x)$ and $\bar H(x',t') = \bar H(x',k+1) = G(x',0) = g(x')$, and since $x\adjeq x'$ and $g$ is continuous we have $\bar H(x,t) \adjeq_\lambda \bar H(x',t')$ as desired.
\end{proof}

Strong homotopy in one step can be expressed in terms of adjacencies as in Proposition \ref{htpadjs}.
\begin{thm}\label{stronghtpadjs}
Let $f,g:X\to Y$ be continuous. Then $f$ is strongly homotopic to $g$ in one step if and only if for every $x,x'\in X$ with $x\adj x'$, we have $f(x)\adjeq g(x')$.
\end{thm}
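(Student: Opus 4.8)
The plan is to unwind the definition of strong homotopy in the case $k=1$ directly in terms of the $\NP_2(\kappa,c_1)$ adjacency on $X\times[0,1]_\Z$, exactly paralleling the proof of Proposition~\ref{htpadjs}. A one-step strong homotopy is a function $H\colon X\times[0,1]_\Z\to Y$ that is $(\NP_2(\kappa,c_1),\lambda)$-continuous, with $H(x,0)=f(x)$ and $H(x,1)=g(x)$ for all $x$. So the whole content is to identify which pairs of points are $\NP_2(\kappa,c_1)$-adjacent in the domain and read off the corresponding constraints.

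For the forward direction, suppose $H$ is such a strong homotopy. Given $x\adj x'$ in $X$, note that $(x,0)$ and $(x',1)$ differ in the first coordinate by a $\kappa$-adjacency and in the second coordinate by a $c_1$-adjacency, hence are adjacent in two positions, so $(x,0)\adj_{\NP_2}(x',1)$. Continuity of $H$ then gives $f(x)=H(x,0)\adjeq_\lambda H(x',1)=g(x')$, which is exactly the claimed condition. (One should also note that the condition for $x=x'$, namely $f(x)\adjeq g(x)$, follows from continuity in the second coordinate; but the statement only asserts it for $x\adj x'$, and continuity of $f$ plus the stated condition will recover the $x=x'$ case anyway, as used below.)

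For the converse, assume $f(x)\adjeq_\lambda g(x')$ whenever $x\adj x'$. Define $H(x,0)=f(x)$ and $H(x,1)=g(x)$, and check $(\NP_2(\kappa,c_1),\lambda)$-continuity by cases on an adjacent pair $(x,t)\adj_{\NP_2}(x',t')$ in $X\times[0,1]_\Z$. If $t=t'=0$ we need $f(x)\adjeq f(x')$, which holds since $x\adjeq x'$ and $f$ is continuous; likewise if $t=t'=1$ we use continuity of $g$. If $t\ne t'$, say $t=0,t'=1$, then $\NP_2$-adjacency forces $x\adjeq_\kappa x'$; when $x\adj x'$ we get $f(x)\adjeq g(x')$ by hypothesis, and when $x=x'$ we must show $f(x)\adjeq g(x)$ — this is the one point that needs care. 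It follows by picking any neighbor $x''\adj x$ (if $X$ has an isolated point the statement is essentially vacuous there, or one argues the singleton case separately) and chaining, or more simply: the hypothesis applied with $x'=x$ is not directly available, so instead observe that $f(x)\adjeq g(x')$ for \emph{every} neighbor $x'$ of $x$ together with continuity of $g$ forces $f(x)\adjeq g(x)$ only if such a neighbor exists — so the genuinely clean route is to also assume (or observe is forced) $f(x)\adjeq g(x)$ from taking $x\adj x'$ and $x'\adj x$ in a connected component of size $\ge 2$, and handle isolated vertices trivially. I expect this degenerate "diagonal" case ($x=x'$ with $x$ isolated) to be the only subtlety; everything else is a direct translation of the $\NP_2$ adjacency conditions, just as in Proposition~\ref{htpadjs}.
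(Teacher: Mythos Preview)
Your overall approach matches the paper's: unwind $\NP_2$-continuity case by case. The forward direction is fine and identical to the paper's argument.

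In the converse you have correctly isolated a case the paper's own proof silently skips: when $t\ne t'$ and $x=x'$, one needs $f(x)\adjeq g(x)$, and the stated hypothesis only speaks of pairs with $x\adj x'$ (and $\adj$ is antireflexive here). Your proposed fix, however, does not work: knowing $f(x)\adjeq g(x')$ for every neighbor $x'$ of $x$, even combined with continuity of $f$ and $g$, does \emph{not} force $f(x)\adjeq g(x)$. Concretely, take $X=\{a,b\}$ with $a\adj b$, $Y=C_4$, and set $f(a)=c_0$, $f(b)=c_1$, $g(a)=c_2$, $g(b)=c_1$. Both maps are continuous and the hypothesis of the theorem holds ($f(a)=c_0\adj c_1=g(b)$ and $f(b)=c_1\adj c_2=g(a)$), yet $f(a)=c_0$ and $g(a)=c_2$ are neither adjacent nor equal in $C_4$. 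Hence $(a,0)\adj_{\NP_2}(a,1)$ while $H(a,0)\not\adjeq H(a,1)$, so $f$ and $g$ are not strongly homotopic in one step.

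So the gap you spotted is real and cannot be patched from the given hypothesis: the converse as literally stated is false. The intended statement should replace $x\adj x'$ by $x\adjeq x'$ (equivalently, add the condition $f(x)\adjeq g(x)$ for every $x$, i.e.\ the conclusion of Proposition~\ref{htpadjs}); with that amendment both your argument and the paper's go through without further change.
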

\begin{proof}
First assume that $f$ is homotopic to $g$ in one step. The homotopy is simply defined by $H(x,0)=f(x)$ and $H(x,1)=g(x)$. Then if $x \adj x'$, we will have $(x,0) \adj_{\NP_2} (x',1)$, and thus since $H$ is $\NP_2$-continuous we have $f(x) = H(x,0) \adjeq H(x',1) = g(x')$ as desired.

Now assume that $f(x) \adjeq g(x')$ for each $x$. Define $H:X\times [0,1]_\Z \to Y$ by $H(x,0)=f(x)$ and $H(x,1)=g(x)$, and we must show that $H$ is $\NP_2$-continuous. Take $(x,t),(x',t') \in X\times [0,1]_\Z$ with $(x,t)\adj_{\NP_2} (x',t')$, and we will show that $H(x,t)\adjeq H(x',t')$. We have a few cases based on the values of $t,t'\in \{0,1\}$.

If $t=t'$, without loss of generality say $t=t'=0$. Since $(x,t)\adj_{\NP_2} (x',t')$, we have $x\adjeq x'$, and so we have 
\[ H(x,t) = H(x,0) = f(x) \adjeq f(x') = H(x',0) = H(x',t') \]
since $f$ is continuous. Thus $H(x,t)\adjeq H(x',t')$ as desired.

Finally, if $t\neq t'$, without loss of generality assume $t=0$ and $t'=1$. Since $(x,t)\adj_{\NP_2} (x',t')$, we have $x\adjeq x'$, and so we have 
\[ H(x,t) = H(x,0) = f(x) \adjeq g(x') = H(x',1) = H(x',t') \]
and so $H(x,t)\adjeq H(x',t')$ as desired.
\end{proof}

By Theorem \ref{strongimplieshtp}, if $f \cong g$ then automatically we have $f \simeq g$. The converse is not true, however, as the following example shows.

One important source of examples in the study of digital images is the \emph{digital cycle} $C_n = \{c_0,\dots, c_{n-1}\}$, with adjacency given by $c_i \adj c_{i+1}$ for each $i$, where for convenience we always read the subscripts modulo $n$. Thus $C_n$ is a digital image of $n$ points which is in many ways topologically analogous to the circle.

\begin{exl}\label{strongrigid}
It is well known that all selfmaps on $C_4$ are homotopic to one another. But we will show that the identity map $\id_{C_4}:C_4 \to C_4$ is not strongly homotopic to any map $f$ with $\#f(C_4) < 4$. It will suffice to show that $\id_{C_4}$ is not strongly homotopic in 1 step to any such map. 

Without loss of generality assume that $f(C_4) \subseteq \{c_0,c_1,c_2\}$, and assume for the sake of a contradiction that $f$ is strongly homotopic in one step to $\id_{C_4}$. Then by Theorem \ref{stronghtpadjs}, since $c_2 \adj c_3$ and $c_0\adj c_3$, we will have $c_2 \adjeq f(c_3)$ and also $c_0 \adjeq f(c_3)$. Thus $f(c_3)$ is adjacent to both $c_0$ and $c_2$, but cannot equal $c_3$ since $c_3 \not\in f(C_4)$. We conclude that $f(c_3)=c_1$. By Theorem \ref{htpadjs}, this contradicts the fact that $f$ is homotopic to the identity in 1 step.
\end{exl}

\section{Punctuated homotopy}\label{punctuatedsec}
Given a (not necessarily strong) homotopy $H(x,t)$, we say that $H$ is \emph{punctuated} if, for each $t$, there is some $x_t$ such that $H(x,t) = H(x,t+1)$ for all $x\neq x_t$. That is, from each stage of the homotopy to the next, the induced map $H_t(x)$ is changing by one point at a time. 

\begin{thm}
Any punctuated homotopy is a strong homotopy.
\end{thm}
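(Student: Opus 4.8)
The plan is to verify directly that a punctuated homotopy $H: X \times [0,k]_\Z \to Y$ is $(\NP_2(\kappa,c_1),\lambda)$-continuous, by taking an arbitrary pair $(x,t) \adj_{\NP_2} (x',t')$ and showing $H(x,t) \adjeq_\lambda H(x',t')$. Since the domain has only two factors, the definition of $\NP_2$ leaves three cases: the two points agree in the $X$-coordinate and are $c_1$-adjacent in time; or they are $\kappa$-adjacent in $X$ and agree in time; or they are $\kappa$-adjacent in $X$ and $c_1$-adjacent in time simultaneously. The first two cases follow immediately from the fact that $H$ is already a homotopy, since continuity of the induced maps $H_x$ and $H_t$ from Definition \ref{homotopydef} gives exactly the required adjacency-or-equality.

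The only case with real content is the third, where $x \adj x'$ and, without loss of generality, $t' = t+1$. Here I would invoke the punctuated hypothesis: there is a point $x_t \in X$ with $H(z,t) = H(z,t+1)$ for every $z \neq x_t$. The idea is to route the adjacency through an intermediate value, choosing the route according to where the exceptional point $x_t$ lies. If $x' \neq x_t$, then $H(x',t+1) = H(x',t)$, and continuity of $H_t$ together with $x \adj x'$ gives $H(x,t) \adjeq H(x',t) = H(x',t+1)$. If instead $x' = x_t$, then because $\kappa$ is antireflexive we have $x \neq x' = x_t$, so $H(x,t) = H(x,t+1)$; now continuity of $H_{t+1}$ with $x \adj x'$ gives $H(x,t) = H(x,t+1) \adjeq H(x',t+1)$. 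Either way the desired relation holds, and since every $\NP_2$-adjacent pair falls into one of the three cases, $H$ is a strong homotopy.

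I expect the main (and essentially only) obstacle to be recognizing that in the diagonal case one must split on whether $x'$ or $x$ is the distinguished point $x_t$; once that observation is made, the argument is a short chase that uses antireflexivity of $\kappa$ to guarantee $x \neq x'$. Everything else is bookkeeping about the two-coordinate $\NP_2$ adjacency, and the argument is uniform in $k$ because both the punctuated condition and the homotopy conditions are stated step-by-step in $t$.
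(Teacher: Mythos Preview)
Your proof is correct and follows essentially the same approach as the paper: a direct verification of $\NP_2$-continuity by case analysis, using the punctuated hypothesis to freeze one coordinate and then invoking continuity of $H_t$ or $H_{t+1}$. The only difference is organizational---you separate the $x=x'$ case up front (so your diagonal case has $x\neq x'$ by antireflexivity), whereas the paper splits first on $t=t'$ versus $t\neq t'$ and then needs an additional subcase $x=x'=x_t$; both decompositions cover the same ground.
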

\begin{proof}
Let $H$ be a punctuated homotopy, and let $(x,t) \adj_{\NP_2} (x',t')$. We must show that $H(x,t) \adjeq H(x',t')$. Since $(x,t) \adj_{\NP_2} (x',t')$ we have $x \adjeq x'$ and $t \adjeq t'$. If $t=t'$, then we have $(x,t) \adj_{\NP_1} (x',t')$ and so $H(x,t) \adjeq H(x',t')$ since $H$ is a homotopy. It remains to consider when $t \adj t'$ and $t\neq t'$. In this case, without loss of generality assume $t'=t+1$.

Since $H$ is a punctuated homotopy, there is some $x_t$ such that $H(x,t) = H(x,t+1)$ for all $x\neq x_t$. 
When $x \neq x_t$, we have 
\[ H(x,t) = H(x,t+1) \adjeq H(x',t+1) = H(x',t') \]
since $H$ is a homotopy. Similarly we have $H(x,t) \adjeq H(x',t')$ when $x' \neq x_t$.

Thus it remains only to consider when $x=x'=x_t$. That is, we must show that $H(x_t,t) \adjeq H(x_t,t') = H(x_t,t+1)$, and this is true because $H$ is a homotopy.
\end{proof}

We also have a sort of converse to the above. While not every strong homotopy is punctuated, any two strongly homotopic maps can be connected by a punctuated homotopy, provided that the domain is finite.

\begin{thm}\label{punctuatedthm}
Let $X$ be a finite digital image, and let $f,g:X\to Y$ be strongly homotopic. Then $f$ and $g$ are homotopic by a punctuated homotopy.
\end{thm}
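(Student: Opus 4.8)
The plan is to first reduce to the one-step case, and then to build an explicit punctuated homotopy by overwriting the values of $f$ with those of $g$ one point at a time.

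For the reduction: a strong homotopy $H:X\times[0,k]_\Z\to Y$ from $f$ to $g$ gives a chain of maps $f=H_0,H_1,\dots,H_k=g$ in which each $H_{j}$ is strongly homotopic to $H_{j+1}$ in one step. If each such consecutive pair can be joined by a punctuated homotopy, then concatenating these homotopies (exactly as in the proof that strong homotopy is an equivalence relation) produces a punctuated homotopy from $f$ to $g$: every time step of the concatenation either falls inside one of the pieces, where at most one point changes, or is one of the ``stationary'' joining steps, where no point changes. Thus it suffices to prove the theorem when $f$ and $g$ are strongly homotopic in one step.

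So assume $f\cong g$ in one step, and write $X=\{x_1,\dots,x_n\}$ (using finiteness of $X$). For $0\le i\le n$ define $h_i:X\to Y$ by $h_i(x_j)=g(x_j)$ when $j\le i$ and $h_i(x_j)=f(x_j)$ when $j>i$, so that $h_0=f$, $h_n=g$, and $h_{i-1}$ and $h_i$ differ only at $x_i$. The key step is to check that each $h_i$ is continuous: given $x_a\adj x_b$, if $a,b\le i$ this follows from continuity of $g$, if $a,b>i$ from continuity of $f$, and if (say) $a\le i<b$ then $h_i(x_a)=g(x_a)$ and $h_i(x_b)=f(x_b)$, and Theorem \ref{stronghtpadjs} applied to the adjacency $x_b\adj x_a$ gives $f(x_b)\adjeq g(x_a)$; the sub-case $b\le i<a$ is symmetric. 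This is exactly the place where strong homotopy, rather than ordinary homotopy, is used.

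Finally I would assemble the homotopy. Since $f\cong g$ in one step we have $f\simeq g$ in one step by Theorem \ref{strongimplieshtp}, so $f(x)\adjeq g(x)$ for all $x$ by Proposition \ref{htpadjs}; hence $h_{i-1}(x)\adjeq h_i(x)$ for every $x\in X$ (they agree off $x_i$, and at $x_i$ we have $f(x_i)\adjeq g(x_i)$), so by Proposition \ref{htpadjs} each consecutive pair $h_{i-1},h_i$ is homotopic in one step. Define $H:X\times[0,n]_\Z\to Y$ by $H(x,i)=h_i(x)$; this is a homotopy because each slice $H_i=h_i$ is continuous and each induced map $H_x$ is $c_1$-continuous, and it is punctuated because passing from stage $i-1$ to stage $i$ changes only the value at $x_i$. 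The only real obstacle is the continuity check for the hybrid maps $h_i$; everything else is bookkeeping.
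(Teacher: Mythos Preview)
Your proof is correct and follows essentially the same route as the paper: reduce to the one-step case, enumerate $X$, and overwrite $f$ by $g$ one coordinate at a time, using Theorem~\ref{stronghtpadjs} to verify that the hybrid maps $h_i$ are continuous. Your write-up is in fact a bit more careful than the paper's (you explicitly invoke Proposition~\ref{htpadjs} and Theorem~\ref{strongimplieshtp} for the time-direction continuity, and your indexing avoids a minor off-by-one issue present in the paper's formula for $H$).
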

\begin{proof}
By induction, it suffices to show that if $f$ and $g$ are strongly homotopic in one step, then they are homotopic by a punctuated homotopy. Enumerate the points of $X$ as $X = \{x_0,\dots, x_n\}$, and define $H:X\times [0,n]_\Z \to Y$ by:
\[ H(x_i,t) = \begin{cases} f(x_i)  & \quad\text{ if } i \ge t, \\
g(x_i) &\quad\text{ if } i < t. \end{cases} \]
Then $H$ moves one point at a time, so we need only to show that it has the appropriate continuity properties to be a homotopy. 

First we show that $H(x,t)$ is continuous in $x$ for fixed $t$. Take $x \adj x'$, then $H(x,t) \in \{f(x),g(x)\}$ and $H(x',t) \in \{f(x'),g(x')\}$. Since $f$ and $g$ are homotopic in one step we have $f(x)\adjeq f(x')$ and $g(x) \adjeq g(x')$, and since $f$ and $g$ are strongly homotopic in one step we have $f(x) \adjeq g(x')$ and $g(x) \adjeq f(x')$. Thus in any case we have $H(x,t) \adjeq H(x',t)$ as desired.

Now we show that $H(x,t)$ is continuous in $t$ for fixed $x$. It suffices to show that $H(x,t) \adjeq H(x,t+1)$ for any $t$. We have $H(x,t) \in \{f(x),g(x)\}$ and also $H(x,t+1) \in \{f(x),g(x)\}$. Since $f$ and $g$ are homotopic in one step we have $f(x)\adjeq g(x)$, and thus $H(x,t) \adjeq H(x,t+1)$ as desired.
\end{proof}

The finiteness assumption above is necessary, as the following example shows.
\begin{exl}
Let $X = \Z \times \{0,1\} \subset \Z^2$, with 8-adjacency, and let $f(x,y) = (x,0)$. Then $f$ is strongly homotopic to $\id_X$ in one step. But $f(x,y)$ and $\id(x,y)$ differ for infinitely many values of $(x,y)\in X$. Since a puncuated homotopy has finite time interval, and can only change one value at a time, there can be no punctuated homotopy from $f$ to $\id_X$. 
\end{exl}

Combining the two theorems above gives us a powerful characterization of strong homotopy. 
\begin{cor}
If $X$ is finite, two continuous maps $f,g:X\to Y$ are strongly homotopic if and only if they are homotopic by a punctuated homotopy.
\end{cor}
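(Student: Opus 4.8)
The plan is to observe that this corollary is an immediate consequence of the two preceding theorems, with no new work required beyond assembling them. I would split into the two implications.

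For the ``only if'' direction, suppose $f$ and $g$ are strongly homotopic. Since $X$ is assumed finite, Theorem \ref{punctuatedthm} applies directly and tells us that $f$ and $g$ are homotopic by a punctuated homotopy. This is exactly what is needed, and the finiteness hypothesis is used precisely here (and, as the preceding example shows, it cannot be dropped).

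For the ``if'' direction, suppose $f$ and $g$ are homotopic by a punctuated homotopy $H$. By the theorem that every punctuated homotopy is a strong homotopy, $H$ is itself a strong homotopy between $f$ and $g$, so $f \cong g$. Note this direction does not require $X$ to be finite.

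I do not expect any obstacle here: the substantive content lives in Theorem \ref{punctuatedthm} (whose proof builds the explicit ``change one point at a time'' homotopy $H$ and checks its continuity in each variable) and in the converse theorem, both already established. The only thing worth flagging in the write-up is to state clearly which hypothesis (finiteness) is needed for which implication, so the reader sees that the ``if'' direction is in fact unconditional.
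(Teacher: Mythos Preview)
Your proposal is correct and matches the paper's approach exactly: the paper presents this corollary with no proof beyond the remark that it follows by combining the two preceding theorems, which is precisely what you do. Your additional observation about which direction actually requires finiteness is a nice clarification that the paper does not make explicit.
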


As an application, we show that the identity map on the $n$-cycle for $n\ge 4$ is not strongly homotopic to any other map:
\begin{exl}
Let $n \ge 4$, and assume for the sake of a contradiction that there is some continuous $f:C_n \to C_n$ with $f \cong \id_{C_n}$ and $f\neq \id_{C_n}$. Without loss of generality, assume that the homotopy from $f$ to $\id_{C_n}$ is punctuated and in one step. Since the homotopy is punctuated, $f$ moves one point, so without loss of generality we may assume that $f(c_0)=c_1$ and $f(c_i)=c_i$ for all $i\neq 0$. This contradicts the continuity of $f$, however, since we will have $c_0\adj c_{n-1}$ but $c_1 = f(c_0) \not \adjeq f(c_{n-1}) = c_{n-1}$ since $n\ge 4$. 
\end{exl}

%
%

\section{Homology groups and strong homotopy}\label{homologysec}
We begin this section with a brief review of the digital homology theory, as presented in \cite{bko11}. For a digital image $X$ and some $q\ge 1$, a $q$-simplex is defined to be any set of $q+1$ mutually adjacent points of $X$. For mutually adjacent points $x_0,\dots,x_q$, the associated $q$-simplex is denoted $\langle x_0,\dots, x_q\rangle$. 

The \emph{chain group} $C_q(X)$ is defined to be the free abelian group generated by the set of all $q$-simplices. If $\rho:\{0,\dots,q\} \to \{0,\dots,q\}$ is a permutation, then in $C_q(X)$ we identify
\[ \langle x_0,\dots, x_q \rangle = (-1)^\rho \langle x_{\rho(0)},\dots,x_{\rho(q)}\rangle, \]
where $(-1)^\rho = 1$ when $\rho$ is an even permutation, and $(-1)^\rho=-1$ when $\rho$ is an odd permutation.

The \emph{boundary homomorphism} $\pd_q: C_q(X) \to C_{q-1}(X)$ is the homomorphism induced by defining:
\[ \pd_q(\langle x_0,\dots,x_q\rangle) = \sum_{i=0}^q (-1)^i \langle x_0,\dots,\widehat{x_i},\dots x_q\rangle, \]
where $\widehat{x_i}$ indicates omission of the $x_i$ coordinate.

This $\pd_q$ gives an exact sequence:
\[ \dots \to C_{q+1}(X) \xrightarrow{\pd_{q+1}} C_q(X) \xrightarrow{\pd_q} C_{q-1}(X) \to \dots C_0(X) \to 0 \]
For each $q$, the group of \emph{$q$-cycles} is defined to be $Z_q(X) = \ker \pd_q \subset C_q(X)$, and the group of \emph{$q$-boundaries} is the group $B_q(X) = \im \pd_{q+1} \subset C_q(X)$. Since the sequence above is exact, $Z_q(X)$ is a subgroup of $B_q(X)$, and the \emph{dimension $q$ homology group} is defined as $H_q(X) = B_q(X) / Z_q(X)$.

Any continuous function $f:X\to Y$ induces a homomorphism $f_{\#,q}:C_q(X) \to C_q(Y)$ defined by
\[ f_{\#,q}(\langle x_0,\dots, x_q\rangle) = \langle f(x_0),\dots,f(x_q)\rangle, \]
where the right side is interpreted as 0 if the set $\{f(x_0),\dots,f(x_q)\}$ has cardinality less than $q$. Usually the value of $q$ is understood, and we simply write $f_{\#,q} = f_\#$. 

Theorem 3.15 of \cite{bko11} shows that such a map $f$ induces a homomorphism $f_{*,q}:H_q(X) \to H_q(X)$ defined by $f_{*,q}(\sigma + B_q(X)) = f_{\#,q}(\sigma) + B_q(X)$ for $\sigma \in C_q(X)$, and that this assignment is functorial, in the sense that $(f\circ g)_* = f_* \circ g_*$ for all $q$. Again, we typically write $f_{*,q} = f_*$ when the $q$ is understood.

These definitions and results are all exactly as expected from the classical homology theory of a simplicial complex. As an example, we compute the homology groups of the cycle $C_n$ for $n\ge 4$.
\begin{thm}\label{Cnhomology}
If $n\ge 4$, we have:
\[ H_q(C_n) = \begin{cases} \Z \quad &\text{ if $q\le 1$,} \\
0 &\text{ if $q>1$.} \end{cases} \]
\end{thm}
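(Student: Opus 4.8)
The plan is to write the chain complex of $C_n$ out explicitly; because $C_n$ is such a simple graph, the computation will closely parallel the classical computation of the homology of the circle from its minimal simplicial model. The first step is to observe that when $n \ge 4$ no three points of $C_n$ are mutually adjacent: if $c_i, c_j, c_k$ were distinct and pairwise adjacent, then (reading subscripts mod $n$) we would need $\{i,k\} = \{j-1,j+1\}$ together with $c_{j-1} \adj c_{j+1}$, and the latter fails precisely because $n \ge 4$. Hence $C_n$ contains no $q$-simplex for $q \ge 2$, so $C_q(C_n) = 0$ and therefore $H_q(C_n) = 0$ for every $q > 1$. This leaves only $q = 0$ and $q = 1$. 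Writing $v_i = \langle c_i \rangle$ and $e_i = \langle c_i, c_{i+1}\rangle$ (subscripts mod $n$), we have $C_0(C_n) \cong \Z^n$ with basis $v_0,\dots,v_{n-1}$ and $C_1(C_n) \cong \Z^n$ with basis $e_0,\dots,e_{n-1}$, with $\pd_1 e_i = v_{i+1} - v_i$ and $\pd_2 = 0$ since $C_2(C_n) = 0$.

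Next I would compute the two remaining homology groups directly. For $q = 1$: since $B_1(C_n) = \im \pd_2 = 0$, we have $H_1(C_n) = Z_1(C_n) = \ker \pd_1$; and since $\pd_1(\sum_i a_i e_i) = \sum_i (a_{i-1} - a_i) v_i$, this kernel consists exactly of the chains with $a_0 = a_1 = \cdots = a_{n-1}$, i.e. it is the infinite cyclic group generated by the fundamental cycle $\sum_i e_i$. Hence $H_1(C_n) \cong \Z$. For $q = 0$: we have $Z_0(C_n) = C_0(C_n)$ and $B_0(C_n) = \im \pd_1$, which is generated by the elements $v_{i+1} - v_i$; thus $H_0(C_n) = C_0(C_n)/B_0(C_n)$ is $\Z^n$ modulo the relations identifying all the $v_i$ with one another, which is infinite cyclic. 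Hence $H_0(C_n) \cong \Z$, and the theorem follows.

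I do not expect a genuine obstacle in this argument. The only places that require any attention are the verification that no simplices of dimension $\ge 2$ exist — which is exactly the step that uses the hypothesis $n \ge 4$, and which would fail for $C_3$ (a single triangle, whose homology is that of a point) — and the modular bookkeeping in the kernel and image computations for $\pd_1$. Everything else is the digital mirror of the standard homology computation for $S^1$.
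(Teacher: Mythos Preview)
Your argument is correct and follows essentially the same approach as the paper: identify the chain groups explicitly, note that $n\ge 4$ rules out all simplices of dimension $\ge 2$, and then compute $\ker\pd_1$ and $\im\pd_1$ directly to obtain $H_0$ and $H_1$. The only difference is cosmetic ordering and your slightly more explicit justification of the absence of $2$-simplices.
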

\begin{proof}
First we prove the case $q=0$. The chain group $C_0(C_n)$ is generated by $n$ different $0$-simplices $\langle c_0\rangle, \dots, \langle c_{n-1}\rangle$. Since $\pd_0$ is a trivial homomorphism, we have $Z_0(C_n) = C_0(C_n)$. Note that for each $i$, we have 
\[ \langle c_i\rangle = (\langle c_i \rangle - \langle c_{i+1} \rangle) + \langle c_{i+1} \rangle = \pd \langle c_{i+1}, c_{i}\rangle + \langle c_{i+1} \rangle, \]
and thus $\langle c_i\rangle - \langle c_{i+1}\rangle \in B_0(C_n)$. Thus $\langle c_i \rangle$ and $\langle c_{i+1}\rangle$ are equal in $H_0(C_n)$ for every $i$. That is, $H_0(C_n)$ is the group generated by $\langle c_0\rangle$, and so $H_0(C_n) = \Z$ as desired.

Now for $q=1$, first we note that there are no $2$-simplices in $C_n$ (because $n\ge 4$), so $C_2(C_n)$ is trivial, and thus $B_1(C_n)$ is trivial. Thus $H_1(C_n)$ will be isomorphic to $Z_1(C_n)$. To determine $Z_1(C_n)$, we must determine which $\alpha \in C_1(C_n)$ satisfy $\pd \alpha = 0$. Any $\alpha\in C_1(C_n)$ can be expressed as:
\[ \alpha = w_1\langle c_0,c_1\rangle + \dots + w_n\langle c_{n-1}, c_0\rangle \]
for $w_i \in \Z$, and then $\pd \alpha = 0$ if and only if:
\begin{align*}
0 &= \pd \alpha = w_1(\langle c_1\rangle - \langle c_0\rangle) + \dots + w_n(\langle c_{0}\rangle - \langle c_{n-1}\rangle) \\
&= (w_n-w_1) \langle c_0 \rangle + (w_1-w_2)\langle c_1 \rangle + \dots + (w_{n-1}-w_n)\langle c_{n-1} \rangle
\end{align*}
and thus we have $w_1=w_2=\dots=w_n$ since the $\langle c_i\rangle$ are linearly independent in $C_1(C_n)$. Then we have shown that $H_1(C_n) = Z_1(C_n)$ is generated by the single element 
\[ \sigma = \sum_{i=0}^{n-1} \langle c_i,c_{i+1}\rangle, \]
and thus $H_1(C_n) = \Z$. 

For $q>1$, there are no $q$-simplices and so $C_q(C_n)$ is trivial, and thus $H_q(C_n)$ is trivial.
\end{proof}

One major difference between the digital theory and classical homology is that the induced homomorphism $f_*$ is not always a digital homotopy invariant.

\begin{exl}
By Theorem \ref{Cnhomology}, the homology group $H_1(C_4)$ is isomorphic to $\Z$.
Because all maps on $C_4$ are homotopic, the identity map is homotopic to a constant map $c$. But it is easy to see that $\id_*:H_1(C_4) \to H_1(C_4)$ is the identity homomorphism of $\Z$, while $c_*:H_1(C_4) \to H_1(C_4)$ is the trivial homomorphism. Thus $\id \simeq c$ but $\id_* \neq c_*$.
\end{exl}

The lack of a homotopy-invariant induced homorphism is a major deficiency in the homology theory of digital images. Lacking this homotopy invariance, the homology groups are not well-behaved with respect to typical topological constructions. For example two homotopy equivalent digital images may have different homology groups.

When we restrict our attention to strong homotopy, however, the induced homomorphism is invariant. The proof below requires $X$ to be finite because we wish to use Theorem \ref{punctuatedthm}. We believe that the theorem holds even for infinite domains, but we will not try to prove it.

\begin{thm}\label{prism}
If $X$ is finite and $f,g:X\to Y$ are strongly homotopic, then the induced homomorphisms $f_*,g_*:H_q(X) \to H_q(Y)$ are equal for each $q$.
\end{thm}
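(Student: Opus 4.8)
The plan is to reduce to the case of a one-step strong homotopy and then to the case of a \emph{punctuated} homotopy, using Theorem~\ref{punctuatedthm}. Since strong homotopy is an equivalence relation and the induced homomorphism is functorial, it suffices to show that $f_*=g_*$ whenever $f$ and $g$ are strongly homotopic in one step; and by Theorem~\ref{punctuatedthm} (using that $X$ is finite) such $f$ and $g$ are connected by a punctuated homotopy $H:X\times[0,n]_\Z\to Y$. Composing across the finitely many time steps, it further suffices to treat the case where $H$ changes the value at a single point: there is a point $p\in X$ with $g(x)=f(x)$ for all $x\neq p$, and $f(p)\adjeq g(p)$ (indeed, since the homotopy is a homotopy, $f(p)\adjeq g(p)$; and since it is strong, $f(x)\adjeq g(p)$ for every neighbor $x$ of $p$, hence $\{f(x):x\adjeq_\kappa p\}\cup\{f(p)\}$ is a set of mutually $\adjeq$ points in $Y$). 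So the crux is a purely algebraic statement: if $f,g$ agree off a single point $p$ and behave compatibly near $p$, then $f_{\#}$ and $g_{\#}$ induce the same map on homology.

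Next I would construct an explicit chain homotopy $D_q:C_q(X)\to C_{q+1}(Y)$ with $\pd_{q+1}D_q+D_{q-1}\pd_q=g_{\#}-f_{\#}$; this immediately gives $f_*=g_*$ on $H_q$. This is the digital analogue of the classical ``prism'' construction (hence the label \texttt{prism} on the theorem). On a generator $\langle x_0,\dots,x_q\rangle$: if none of the $x_i$ equals $p$, then $f_{\#}$ and $g_{\#}$ agree on it, so set $D_q\langle x_0,\dots,x_q\rangle=0$. If exactly one vertex, say $x_0$, equals $p$ (the simplex being written so that $p$ comes first, at the cost of a sign), set
\[
D_q\langle p,x_1,\dots,x_q\rangle=\langle f(p),g(p),f(x_1),\dots,f(x_q)\rangle,
\]
interpreted as $0$ if the listed points are not all distinct (equivalently if the set has size $<q+2$), matching the convention for the induced chain map. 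One must check this is well-defined: a $q$-simplex of $X$ can contain $p$ at most once (it is a \emph{set}), and the only ambiguity is the ordering, which is handled by the antisymmetry identification exactly as for $f_{\#}$ itself. One also must check that $\{f(p),g(p),f(x_1),\dots,f(x_q)\}$ is genuinely a simplex of $Y$ when its elements are distinct: the $f(x_i)$ are mutually $\adjeq$ since $f$ is continuous and the $x_i$ are mutually adjacent; each $f(x_i)$ is $\adjeq g(p)$ by the strong-homotopy condition applied to $x_i\adj p$ (when $x_i\neq p$, which holds here since $x_i\neq x_0=p$); and $f(p)\adjeq g(p)$ from the homotopy. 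So the target chain is legitimate.

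The main obstacle — and the bulk of the work — is verifying the chain-homotopy identity $\pd D+D\pd=g_{\#}-f_{\#}$ on a generator, being careful about the degenerate cases where a listed simplex collapses (has a repeated vertex) and is therefore read as $0$. I would split into the same cases as in the definition of $D$. When the generator $\langle x_0,\dots,x_q\rangle$ misses $p$ entirely: $g_{\#}-f_{\#}$ vanishes on it, $D$ vanishes on it, and in $D\pd$ only the faces still missing $p$ survive (all faces miss $p$), so $D\pd$ also vanishes; both sides are $0$. When the generator is $\langle p,x_1,\dots,x_q\rangle$: expanding $\pd D\langle p,x_1,\dots,x_q\rangle$ and $D\pd\langle p,x_1,\dots,x_q\rangle$ and cancelling, the terms organize so that the face of $\langle f(p),g(p),f(x_1),\dots\rangle$ omitting $g(p)$ gives $-f_{\#}$ of the generator, the face omitting $f(p)$ gives $+g_{\#}$ of the generator (here using that $g$ agrees with $f$ away from $p$, so $\langle g(p),f(x_1),\dots,f(x_q)\rangle=\langle g(p),g(x_1),\dots,g(x_q)\rangle=g_{\#}\langle p,x_1,\dots,x_q\rangle$), and the remaining faces of $D$ cancel in pairs against $D$ applied to the faces of the generator that still contain $p$. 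The degenerate cases — e.g.\ $f(p)=g(p)$, or $f(p)$ or $g(p)$ coinciding with some $f(x_i)$ — need to be checked to confirm that the zero-conventions on both sides stay consistent; this is routine but is where a careless argument would break, so I would handle it explicitly by noting that whenever a simplex on the left acquires a repeated vertex, the corresponding cancelling partner does too. Once the identity is established for all generators, linearity extends it to all of $C_q(X)$, and the theorem follows.
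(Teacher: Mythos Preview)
Your proposal is correct and follows essentially the same route as the paper: reduce via Theorem~\ref{punctuatedthm} to the case where $f$ and $g$ differ at a single point $p$, build a prism-type chain homotopy, and verify $\pd D + D\pd = g_\# - f_\#$ by the same case split on whether the simplex contains $p$. The only cosmetic difference is that the paper first writes down the full classical prism operator $P(\sigma)=\sum_{j}(-1)^j\langle f(x_0),\dots,f(x_j),g(x_j),\dots,g(x_q)\rangle$ and then observes that in the punctuated setting all terms with $j\ge 1$ vanish (since $f(x_j)=g(x_j)$), arriving at exactly your $D$.
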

\begin{proof}
By induction and Theorem \ref{punctuatedthm}, it suffices to prove the result when $f$ and $g$ are homotopic by a punctuated homotopy in one step. 
We mimic the proof for this result in classical homology theory, see for example Proposition 2.10 of \cite{hatc02}. We define the ``prism operator'' $P:C_q(X) \to C_{q+1}(Y)$ as follows: For $\sigma\in C_q(X)$ with $\sigma = \langle x_0,\dots,x_q\rangle$, let:
\[ P(\sigma) = \sum_{j=0}^q (-1)^j \langle f(x_0),\dots, f(x_j), g(x_j),\dots, g(x_q)\rangle, \]
where the term $\langle f(x_0),\dots, f(x_j), g(x_j),\dots, g(x_q)\rangle$ is interpreted as $0$ if any of these points are equal.

Note that the definition of $P$ only makes sense if $\{f(x_0),\dots, f(x_j), g(x_j),\dots, g(x_q)\}$ is indeed a set of $q+1$ points which are mutually adjacent or equal. This is ensured because $f$ and $g$ are strongly homotopic in 1 step, and thus by Theorem \ref{stronghtpadjs}, since $\{x_0,\dots, x_q\}$ are mutually adjacent, the points of $\{f(x_0),\dots, f(x_q),g(x_0),\dots, g(x_q)\}$ are mutually adjacent or equal. This is the point at which the proof will fail if the homotopy is not strong.

The bulk of the proof consists of proving the following formula:
\begin{equation}\label{prismeq} 
\pd (P(\sigma)) = g_\#(\sigma) - f_\#(\sigma) - P(\pd \sigma). 
\end{equation}

Since $f$ and $g$ are homotopic in one step by punctuated homotopy, there is some $x'\in X$ such that $f(x)=g(x)$ for all $x\neq x'$. Formula \eqref{prismeq} is easy to prove when $\sigma$ does not use the vertex $x'$:

If $\sigma = \langle x_0,\dots,x_q\rangle$ with $x_i \neq x'$ for all $i$, then $f(x_i)=g(x_i)$ for each $i$. Thus $P(\sigma) = 0$, since $\langle f(x_0),\dots, f(x_j), g(x_j),\dots, g(x_q)\rangle$ will repeat the point $f(x_j)=g(x_j)$. Thus the left side of \eqref{prismeq} is 0. For the right side, note that $\pd \sigma$ also does not use the point $x'$, and so for the same reasons we will have $P(\pd \sigma) = 0$. Also since $\sigma$ does not use $x'$, we will also have $f_\#(\sigma) = g_\#(\sigma)$, and thus the right side of \eqref{prismeq} is also 0, and we have proved \eqref{prismeq}.

Now we prove \eqref{prismeq} in the case when $\sigma$ does use the point $x'$. Without loss of generality assume $\sigma = \langle x', x_1,\dots,x_q\rangle$. In this case we have
\begin{align*} 
P(\sigma) &= \langle f(x'),g(x'),g(x_1),\dots, g(x_q) \rangle \\
&\qquad + \sum_{j=1}^q (-1)^j \langle f(x'),f(x_1),\dots, f(x_j), g(x_j), \dots, g(x_q) \rangle \\
&= \langle f(x'),g(x'),g(x_1),\dots, g(x_q) \rangle
\end{align*}
where most of the terms above are 0 because they repeat the point $f(x_j)=g(x_j)$. 
Then the left side of \eqref{prismeq} is
\begin{align*}
\pd (P(\sigma)) &= \pd(\langle f(x'),g(x'), g(x_1), \dots, g(x_q)\rangle) \\
&= \langle g(x'), g(x_1), \dots, g(x_q)\rangle - \langle f(x'), g(x_1), \dots, g(x_q)\rangle \\
&\qquad+ \sum_{i=1}^q (-1)^{i+1} \langle f(x'),g(x'),g(x_1),\dots, \widehat {g(x_i)}, \dots, g(x_q) \rangle 
\end{align*}
Since $g(x_i)=f(x_i)$, the above simplifies to:
\[ \pd (P(\sigma)) = g_\#(\sigma) - f_\#(\sigma) + \sum_{i=1}^q (-1)^{i+1} \langle f(x'),g(x'),g(x_1),\dots, \widehat {g(x_i)}, \dots, g(x_q) \rangle. \]

To prove \eqref{prismeq}, it suffices to show that the summation above equals $-P(\pd(\sigma))$. We have:
\begin{align*}
P(\pd(\sigma)) &= P\left( \langle x_1,\dots,x_q\rangle + \sum_{i=1}^q (-1)^i \langle x', x_1,\dots, \widehat{x_i}, \dots, x_q\rangle \right) \\
&= P\left(\sum_{i=1}^q (-1)^i \langle x', x_1,\dots, \widehat{x_i}, \dots, x_q\rangle \right) 
\end{align*}
where $P(\langle x_1,\dots,x_q\rangle) = 0$ since this simplex does not use $x'$. Now when we apply $P$ above, the only nonzero terms are those with $j=0$ in the definition of $P$. All others will repeat some point $f(x_i)=g(x_i)$. Thus we have:
\begin{align*}
P(\pd(\sigma)) &= \sum_{i=1}^q (-1)^i \langle f(x'),g(x'),g(x_1),\dots,\widehat{g(x_i)}, \dots, g(x_n) \rangle \\
&= -  \sum_{i=1}^q (-1)^{i+1} \langle f(x'),g(x'),g(x_1),\dots,\widehat{g(x_i)}, \dots, g(x_n) \rangle 
\end{align*}
and we have proved \eqref{prismeq}. 

The formula \eqref{prismeq} holds when $\sigma$ is any simplex, and so by linearity it will hold for any chain. Now let $\alpha\in Z_q(X)$ be a $q$-cycle, so $\pd(\alpha) = 0$ and thus $P(\pd(\alpha))=0$. Then by \eqref{prismeq} we have:
\[ g_\#(\alpha) - f_\#(\alpha) = \pd P(\alpha) \in B_q(Y). \]
Thus $g_\#(\alpha)$ and $f_\#(\alpha)$ differ by a $q$-boundary, that is, 
$g_*(\alpha) = f_*(\alpha) \in H_q(Y)$.
\end{proof}

Theorem \ref{prism}, and its requirement that the homotopy be strong, suggests that the canonical setting for digital homology should be continuous functions and strong homotopies, rather than the traditional focus on continuous functions and homotopies.

Theorem \ref{prism} allows us to prove that homology groups are preserved by strong homotopy equivalence, defined using strong homotopy in place of ordinary homotopy in the standard definition of homotopy equivalence.
\begin{defn}
Digital images $X$ and $Y$ are \emph{strong homotopy equivalent} when there are continuous functions $f:X\to Y$ and $g:Y \to X$ such that $f \circ g \cong \id_Y$ and $g\circ f \cong \id_X$.
\end{defn}

By standard arguments one can easily show that strong homotopy equivalence is an equivalence relation. 

\begin{cor}
Let $X$ be finite, and let $f:X\to Y$ be a strong homotopy equivalence. Then $f_*:H_q(X) \to H_q(Y)$ is an isomorphism for each $q$.
\end{cor}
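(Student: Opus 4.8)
The plan is to reproduce the classical argument that a homotopy equivalence induces isomorphisms on homology, now using the strong homotopy invariance of the induced homomorphism (Theorem~\ref{prism}) in place of the usual homotopy invariance, together with the functoriality of $f \mapsto f_*$ recorded above (namely $(f \circ g)_* = f_* \circ g_*$ and $(\id)_* = \id$).

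First I would unwind the hypothesis: a strong homotopy equivalence $f : X \to Y$ comes equipped with a continuous $g : Y \to X$ satisfying $g \circ f \cong \id_X$ and $f \circ g \cong \id_Y$. Since $X$ is finite, Theorem~\ref{prism} applies to the strongly homotopic pair $g \circ f,\ \id_X : X \to X$ and gives $(g \circ f)_* = (\id_X)_*$ as homomorphisms $H_q(X) \to H_q(X)$ for every $q$. By functoriality this says $g_* \circ f_* = \id_{H_q(X)}$, so $f_*$ is injective and $g_*$ is surjective onto $H_q(X)$. Running the same argument on the pair $f \circ g,\ \id_Y : Y \to Y$ gives $f_* \circ g_* = \id_{H_q(Y)}$; combining the two identities exhibits $g_*$ as a two-sided inverse of $f_*$, so $f_*$ is an isomorphism in each dimension.

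The one delicate point — and the step I expect to be the real obstacle — is this second application of Theorem~\ref{prism}: there the common domain of the two maps being compared is $Y$, and the theorem as stated requires that domain to be finite. So one must either add the hypothesis that $Y$ is finite as well (which is automatic in the catalog of Section~\ref{catalogsec}, where every image under consideration is finite), or else first argue that a digital image strong homotopy equivalent to a finite image is itself finite. Granting finiteness of $Y$, the remainder is pure bookkeeping: no prism operator needs to be rebuilt, since Theorem~\ref{prism} already packages the work, and nothing beyond the stated functoriality of $f \mapsto f_*$ is used.
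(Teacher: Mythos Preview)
Your proposal is correct and follows essentially the same route as the paper: unpack the strong homotopy equivalence into maps $f,g$, apply functoriality to get $(g\circ f)_* = g_*\circ f_*$ and $(f\circ g)_* = f_*\circ g_*$, invoke Theorem~\ref{prism} on each composite to identify these with the identity homomorphisms, and conclude that $f_*$ and $g_*$ are mutual inverses.

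Your caution about the finiteness of $Y$ is well placed and, in fact, more careful than the paper itself. The paper's proof applies Theorem~\ref{prism} to the pair $f\circ g \cong \id_Y$ (with common domain $Y$) without comment, even though the hypothesis of the corollary only asserts that $X$ is finite. So the gap you flag is present in the original as well; the paper simply does not acknowledge it. In the context of the paper --- where the motivating applications are the finite images catalogued in Section~\ref{catalogsec} --- this is harmless, and the author remarks just before Theorem~\ref{prism} that the finiteness restriction is believed to be unnecessary. But as a matter of formal hygiene you are right that either $Y$ should be assumed finite or an auxiliary argument supplied.
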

\begin{proof}
Let $g:Y\to X$ be continuous with $f\circ g \cong \id_Y$ and $g\circ f \cong \id_X$. By functoriality of the induced homomorphism on homology, we have $(f\circ g)_* = f_* \circ g_*$, and thus by Theorem \ref{prism} we have $f_* \circ g_* = (\id_{Y})_* = \id_{H_q(Y)}$. Similarly $g_* \circ f_* = \id_{H_q(X)}$.
Thus the homomorphisms $f_*:H_q(X)\to H_q(Y)$ and $g_*:H_q(Y) \to H_q(X)$ are inverses, and so both are isomorphisms. 
\end{proof}

The example of $C_4$ again shows the the ``strong'' assumption above is necessary. The constant map $c:C_4 \to \{c\}$ is a homotopy equivalence from $C_4$ to a point, but $c_*: H_1(C_4) \to H_1(\{c\})$ is not an isomorphism since $H_1(C_4) = \Z$ and $H_1(\{c\}) = 0$. 

Since any isomorphism of digital images is automatically a strong homotopy equivalence, we have:
\begin{cor}
Let $f:X\to Y$ be an isomorphism of finite digital images. Then $f_*:H_q(X) \to H_q(Y)$ is an isomorphism for each $q$.
\end{cor}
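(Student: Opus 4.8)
\section*{Proof proposal}

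The plan is to deduce this immediately from the preceding corollary, by observing that an isomorphism is a particularly trivial kind of strong homotopy equivalence. First I would recall that if $f:X\to Y$ is an isomorphism of digital images, then by definition $f$ is a continuous bijection whose inverse $g = f^{-1}:Y\to X$ is also continuous, and these satisfy $f\circ g = \id_Y$ and $g\circ f = \id_X$ \emph{on the nose}, not merely up to homotopy.

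Next I would invoke reflexivity of strong homotopy (established earlier: the constant-in-$t$ map $H(x,t)=h(x)$ is a strong homotopy from any continuous $h$ to itself) to upgrade these equalities to the statements $f\circ g \cong \id_Y$ and $g\circ f \cong \id_X$. This is exactly the definition of $f$ being a strong homotopy equivalence, with $g$ as its strong homotopy inverse.

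Finally, since $X$ is finite, I would apply the previous corollary to conclude that $f_*:H_q(X)\to H_q(Y)$ is an isomorphism for every $q$. There is essentially no obstacle here: the only thing to be careful about is the hypothesis of finiteness of $X$, which is needed so that the previous corollary (ultimately Theorem \ref{prism}, hence Theorem \ref{punctuatedthm}) applies; this is already part of the statement. The argument is a one-line consequence of the machinery built up in this section, and I would present it as such.
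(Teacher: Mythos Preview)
Your proposal is correct and matches the paper's approach exactly: the paper simply remarks that any isomorphism is automatically a strong homotopy equivalence and invokes the preceding corollary. Your explicit mention of reflexivity and the finiteness hypothesis just spells out what the paper leaves implicit.
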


The converse of Theorem \ref{prism} is not true. That is, it is possible for the homomorphism to be invariant even if the homotopy is not strong. We will show that the induced homomorphisms for selfmaps of $C_n$ with $n>4$ are invariant even for homotopies which are not strong.

In the case of $C_n$, we can make a full computation of the induced homomorphisms for any selfmap, using results from \cite{bs19}. Let $c:C_n \to C_n$ be the constant map $c(c_i) = c_0$, let $l:C_n \to C_n$ be the ``flip map'' $l(c_i) = c_{-i}$, and for some integer $d$, let $r_d:C_n \to C_n$ be the rotation $r_d(c_i) = c_{i+d}$. Theorem 9.3 of \cite{bs19} shows that there are exactly 3 homotopy classes of selfmaps on $C_n$: any map $f:C_n \to C_n$ is either strongly homotopic to a constant, or is homotopic to the identity and equals $r_d$ for some $d$, or is homotopic to the flip map and equals $r_d \circ l$ for some $d$. (The strongness of the homotopy to the constant was not mentioned in \cite{bs19}, but the homotopy demonstrated in the proof in that paper is easily made punctuated and therefore strong.)

\begin{thm}\label{Cninduced}
Let $n>4$, and let $f:C_n \to C_n$ be continuous. Then for all $q>1$, the induced homomorphism $f_{*,q}:H_q(C_n) \to H_q(C_n)$ is trivial, for $q=0$ the induced homomorphism $f_{*,0} = \id$, and for $q=1$ we have:
\[ f_{*,1} = \begin{cases} \id & \text{ if $f \simeq \id_{C_n},$} \\
-\id &\text{ if $f\simeq l$,} \\
0 &\text{ if $f \simeq c$.} \end{cases} \]
\end{thm}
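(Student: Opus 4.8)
The plan is to reduce the statement to the classification of selfmaps of $C_n$ recalled just above, together with three short chain-level computations. First I would note that, since $n>4$, the three classes described there --- maps strongly homotopic to a constant, maps homotopic to $\id_{C_n}$ (which then equal some $r_d$), and maps homotopic to $l$ (which then equal some $r_d\circ l$) --- are genuinely distinct; consequently the three conditions $f\simeq c$, $f\simeq \id_{C_n}$, $f\simeq l$ in the statement correspond exactly to these three classes and form an exhaustive, mutually exclusive case split.

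The cases $q\neq 1$ need essentially no work. For $q>1$, Theorem \ref{Cnhomology} gives $H_q(C_n)=0$, so $f_{*,q}$ is trivial. For $q=0$, $H_0(C_n)=\Z$ is generated by $\langle c_0\rangle + B_0(C_n)$, and the argument in the proof of Theorem \ref{Cnhomology} shows $\langle c_i\rangle - \langle c_j\rangle\in B_0(C_n)$ for all $i,j$; hence $f_{*,0}(\langle c_0\rangle + B_0(C_n)) = \langle f(c_0)\rangle + B_0(C_n) = \langle c_0\rangle + B_0(C_n)$, so $f_{*,0}=\id$ for every continuous $f$.

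The substance is the case $q=1$. By Theorem \ref{Cnhomology}, since $n>4$ we have $B_1(C_n)=0$, so $H_1(C_n)=Z_1(C_n)=\Z$ is generated by $\sigma=\sum_{i=0}^{n-1}\langle c_i,c_{i+1}\rangle$. I would handle the three classes in turn. If $f\simeq c$, the classification says $f$ is in fact \emph{strongly} homotopic to a constant; since $C_n$ is finite, Theorem \ref{prism} applies and gives $f_{*,1}=c_{*,1}$, and $c_{\#,1}\sigma=0$ since each $\langle c_i,c_{i+1}\rangle$ maps to the degenerate $\langle c_0,c_0\rangle=0$, whence $f_{*,1}=0$. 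If $f\simeq\id_{C_n}$, the classification says $f=r_d$ for some $d$; reindexing the sum modulo $n$ gives $(r_d)_{\#,1}\sigma = \sum_i\langle c_{i+d},c_{i+1+d}\rangle = \sigma$, so $(r_d)_{*,1}=\id$. If $f\simeq l$, the classification says $f=r_d\circ l$; here $l_{\#,1}\langle c_i,c_{i+1}\rangle = \langle c_{-i},c_{-i-1}\rangle = -\langle c_{-i-1},c_{-i}\rangle$, and summing over $i$ and reindexing gives $l_{\#,1}\sigma=-\sigma$, so $l_{*,1}=-\id$; finally functoriality of the induced homomorphism gives $(r_d\circ l)_{*,1} = (r_d)_{*,1}\circ l_{*,1} = -\id$.

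None of this is deep: the only external inputs are the classification of \cite{bs19} and the strong-homotopy invariance of Theorem \ref{prism}, the latter used only for the nullhomotopic case. The step to carry out most carefully is the orientation bookkeeping in the flip case, where the odd transposition identity $\langle c_{-i},c_{-i-1}\rangle = -\langle c_{-i-1},c_{-i}\rangle$ together with the reindexing $j=-i-1$ is exactly what produces the sign in $l_{*,1}=-\id$; tracking that sign correctly is the main --- and only mild --- obstacle.
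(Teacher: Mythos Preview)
Your proposal is correct and follows essentially the same approach as the paper: both use the classification from \cite{bs19} to reduce to $r_d$, $r_d\circ l$, or a map strongly homotopic to a constant, invoke Theorem \ref{prism} for the nullhomotopic case, and do the direct chain-level computation on the generator $\sigma$ for the other two. The only cosmetic difference is that you factor the flip case via functoriality as $(r_d)_{*,1}\circ l_{*,1}$, whereas the paper computes $(r_d\circ l)_{\#,1}\sigma$ in one pass; the sign bookkeeping is identical either way.
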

\begin{proof}
For $q>1$ we have already seen that $H_q(C_n)$ is a trivial group, so we will have $f_*=0$ automatically.
When $f$ is homotopic to a constant, as mentioned above, in fact $f \cong c$ and thus $f_*=c_*$ for all $q$, and so $f_{*,0}$ is the identity and $f_{*,q}$ is trivial for $q>0$.

Now we consider when $f$ is homotopic to the identity or the flip map, for $q\in \{0,1\}$. First we consider $q=0$ and $f\simeq \id_{C_n}$. Since $H_0(C_n)$ is generated by $\langle c_0 \rangle$, it suffices to show that $f_*(\langle c_0\rangle) = \langle c_0 \rangle$. Let $d$ be some integer with $f= r_d$, and we have:
\[ f_*(\langle c_0 \rangle) = \langle f(c_0) \rangle = \langle c_d \rangle = \langle c_0 \rangle \]
as desired. Exactly the same argument applies for $f\simeq l$, since we will still have $\langle f(c_0) \rangle = \langle c_d \rangle$ for some $d$.

Now for $q=1$, and $f\simeq \id_{C_n}$ we must show $f_*(\sigma) = \sigma$, where $\sigma = \sum_{i=0}^{n-1} \langle c_i,c_{i+1} \rangle$ as in the proof of Theorem \ref{Cnhomology}. Let $f = r_d$, and we have:
\[ f(\sigma) =  \sum_{i=0}^{n-1} \langle f(c_i), f(c_{i+1}) \rangle = \sum_{i=0}^{n-1} \langle c_{i+d},c_{i+1+d} \rangle = \sigma \]
as desired.

Finally we consider $q=1$ and $f\simeq l$, and we must show $f_*(\sigma) = -\sigma$. Let $f=r_d \circ l$, so $f(c_i) = c_{d-i}$, and we have:
\begin{align*} 
f(\sigma) &= \sum_{i=0}^{n-1} \langle f(c_i), f(c_{i+1}) \rangle =  \sum_{i=0}^{n-1} \langle c_{d-i},c_{d-i-1} \rangle = \sum_{i=0}^{n-1} \langle c_{-i+1},c_{-i} \rangle  \\
&= \sum_{i=0}^{n-1} \langle c_{i+1},c_{i} \rangle = \sum_{i=0}^{n-1} -\langle c_i,c_{i+1}\rangle = -\sigma
\end{align*}
as desired.
\end{proof}

The three cases of Theorem \ref{Cninduced} suffice to compute $f_*$ for any selfmap of $C_n$, and we note that in each of the three homotopy classes, the set of induced homomorphisms is different. We obtain a sort of Hopf theorem for digital cycles:
\begin{cor}
Let $n> 4$, and let $f,g:C_n \to C_n$ be continuous. Then $f_{*,q} = g_{*,q}$ for each $q$ if and only if $f\simeq g$. 
\end{cor}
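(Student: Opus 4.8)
The plan is to deduce this directly from Theorem \ref{Cninduced} together with the classification (from \cite{bs19}, quoted above) of selfmaps of $C_n$ into exactly three homotopy classes: those homotopic to $\id_{C_n}$, those homotopic to the flip map $l$, and those homotopic to the constant map $c$. The corollary is essentially a repackaging of that theorem, so no genuinely new computation is needed.

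For the forward implication, suppose $f \simeq g$. Then $f$ and $g$ lie in the same one of the three homotopy classes. Theorem \ref{Cninduced} expresses $f_{*,q}$ purely in terms of which homotopy class $f$ belongs to (for $q = 0$ it is always $\id$; for $q > 1$ it is always $0$; for $q = 1$ it is $\id$, $-\id$, or $0$ according to the class). Hence $f_{*,q} = g_{*,q}$ for every $q$. I would remark explicitly here that this is the one place where $C_n$ with $n > 4$ is better-behaved than the general digital image: unlike the $C_4$ situation, homotopic selfmaps of $C_n$ really do induce the same homomorphism.

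For the reverse implication, suppose $f_{*,q} = g_{*,q}$ for all $q$; in particular $f_{*,1} = g_{*,1}$ as endomorphisms of $H_1(C_n)$. Since $n > 4$, Theorem \ref{Cnhomology} gives $H_1(C_n) \cong \Z$, and the three possible values $\id$, $-\id$, $0$ of the degree-one induced map are pairwise distinct endomorphisms of $\Z$. By Theorem \ref{Cninduced}, the value of $f_{*,1}$ therefore pins down the homotopy class of $f$ uniquely, and likewise for $g$; since $f_{*,1} = g_{*,1}$, the maps $f$ and $g$ belong to the same homotopy class, so $f \simeq g$.

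There is no real obstacle here — the work has all been done in Theorem \ref{Cninduced} and the cited classification. The only point requiring a moment's care is the verification that the three candidate homomorphisms are genuinely distinct, which is exactly why the hypothesis $n > 4$ (ensuring $H_1(C_n) \neq 0$) is used; for $n = 4$ the identity and constant maps are homotopic yet induce $\id$ and $0$ on $H_1(C_4) \cong \Z$, so the statement would fail. I would close by noting that, combined with Theorem \ref{Cninduced}, this gives a complete "Hopf-type" characterization of homotopy classes of selfmaps of $C_n$ by their induced homology homomorphisms.
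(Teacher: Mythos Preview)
Your argument is correct and matches the paper's own reasoning: the corollary is stated without proof, preceded only by the remark that ``in each of the three homotopy classes, the set of induced homomorphisms is different,'' which is precisely your observation that $\id$, $-\id$, and $0$ are distinct on $H_1(C_n)\cong\Z$. One small inaccuracy in your aside: the hypothesis $n>4$ is not what guarantees $H_1(C_n)\neq 0$ (indeed $H_1(C_4)=\Z$ as well); rather, $n>4$ is needed because Theorem~\ref{Cninduced} and the three-class classification from \cite{bs19} are only valid in that range---for $C_4$ all selfmaps are homotopic, which is what actually breaks the forward implication.
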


\section{Catalog of small digital images up to strong homotopy equivalence}\label{catalogsec}
The authors of \cite{hmps15} produced a catalog of all digital images up to homotopy equivalence of 7 points or fewer. The present author in \cite{stae15} extended this listing to 9 points, making heavy use of computer enumerations. 

In this section we present a similar listing of all digital images up to strong homotopy equivalence of 9 points or fewer. Again we rely on computer search, though as we will see the search in the present case is much easier than that required in \cite{hmps15} and \cite{stae15}.

Following a definition in \cite{hmps15}, we define:

\begin{defn}
A finite digital image $X$ is \emph{strongly reducible} when it is strong homotopy equivalent to a digital image of fewer points.
\end{defn}

Theorem 2.10 of \cite{hmps15} showed that if $X$ and $Y$ are not reducible, then $X$ and $Y$ are homotopy equivalent if and only if they are isomorphic. Since any strongly reducible image is reducible, we obtain: 
\begin{lem}
If $X$ and $Y$ are finite and not strongly reducible, then they are strong homotopy equivalent if and only if they are isomorphic.
\end{lem}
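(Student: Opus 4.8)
The plan is to reduce the Lemma to the following rigidity statement: \emph{on a finite digital image that is not strongly reducible, every continuous selfmap strongly homotopic to the identity equals the identity.} Granting this, the Lemma is quick. One direction is the remark already in the excerpt that an isomorphism is a strong homotopy equivalence. Conversely, if $f:X\to Y$ and $g:Y\to X$ realize a strong homotopy equivalence of finite non-strongly-reducible images, then $g\circ f\cong\id_X$ and $f\circ g\cong\id_Y$, so the rigidity statement forces $g\circ f=\id_X$ and $f\circ g=\id_Y$ as functions; hence $g=f^{-1}$, which is continuous, and $f$ is an isomorphism. (A direct argument really is needed: the hypothesis ``not strongly reducible'' is strictly weaker than ``not reducible'' --- by Example~\ref{strongrigid} the cycle $C_4$ is not strongly reducible, yet it is reducible --- so the Lemma is not merely an instance of the corresponding statement for ordinary homotopy.)

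To prove the rigidity statement I would use the digital analogue of a beat point: call a point $x\in X$ \emph{dominated by} $x_0$ when $x\adj x_0$ and every point adjacent to $x$ is adjacent or equal to $x_0$. The first step is to observe that a non-strongly-reducible image has no dominated point. Indeed, if $x$ is dominated by $x_0$, let $\iota:X\setminus\{x\}\hookrightarrow X$ be the inclusion and let $r:X\to X\setminus\{x\}$ fix every point other than $x$ and send $x\mapsto x_0$. The domination hypothesis makes it a one-line check that $r$ is continuous and that $\iota\circ r\cong\id_X$ in one step via Theorem~\ref{stronghtpadjs} (the only nontrivial adjacency to examine is one at $x$), while $r\circ\iota=\id_{X\setminus\{x\}}$ trivially. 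So $X$ is strong homotopy equivalent to the smaller image $X\setminus\{x\}$, that is, $X$ is strongly reducible.

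The second step handles a finite $X$ with no dominated point and a continuous $h$ with $h\cong\id_X$. By Theorem~\ref{punctuatedthm}, $h$ and $\id_X$ are joined by a punctuated homotopy, which yields a chain $\id_X=h_0,h_1,\dots,h_N=h$ whose consecutive maps differ at at most one point and are strongly homotopic in one step (a one-step restriction of a punctuated homotopy is again a strong homotopy). I would then show by induction that every $h_j=\id_X$: assuming $h_{j-1}=\id_X$, if $h_j\ne\id_X$ then $h_j$ agrees with $\id_X$ except at a single point $p$, with $h_j(p)=q\ne p$; since $h_j\cong\id_X$ in one step, Proposition~\ref{htpadjs} forces $q=h_j(p)\adjeq p$, hence $q\adj p$, and Theorem~\ref{stronghtpadjs} forces $q=h_j(p)\adjeq p'$ for every $p'\adj p$. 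These two facts say exactly that $p$ is dominated by $q$, contradicting our hypothesis; so $h=h_N=\id_X$, which is the rigidity statement.

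I expect the second step to be the real obstacle, and its essential ingredient is the passage from an arbitrary strong homotopy $h\cong\id_X$ --- a priori with many steps and not punctuated --- to a chain of single-point moves; this is exactly what the punctuated-homotopy theorem provides, and it is precisely the feature unavailable for ordinary homotopy, as $C_4$ (contractible, every vertex having two non-adjacent neighbours and hence not dominated) illustrates. As a byproduct the argument also characterizes finite non-strongly-reducible images as precisely those with no dominated point: one implication is the first step, and for the other, a strong homotopy equivalence onto an image with fewer points would make $g\circ f$ a selfmap of $X$ with proper image, contradicting $g\circ f=\id_X$.
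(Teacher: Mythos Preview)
Your proof is correct and in fact more careful than the paper's. The paper does not actually prove the Lemma: it cites \cite[Theorem~2.10]{hmps15} (the analogous statement for ordinary homotopy and reducibility) and writes ``since any strongly reducible image is reducible, we obtain'' the result. As you rightly note, this implication runs the wrong way --- ``not strongly reducible'' is strictly \emph{weaker} than ``not reducible,'' and your $C_4$ example confirms the gap --- so the Lemma is not a formal consequence of the cited theorem. Presumably the author intends that the \emph{proof} of \cite[Theorem~2.10]{hmps15} adapts with ``strong'' inserted throughout, but this is left unsaid.

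Your route is genuinely different and self-contained: you establish a rigidity statement (on a finite non-strongly-reducible image, only $\id_X$ is strongly homotopic to $\id_X$) by combining the punctuated-homotopy decomposition of Theorem~\ref{punctuatedthm} with a dominated-point criterion that is essentially one direction of the later Theorem~\ref{strongsubnbhd}, proved independently. What this buys is an explicit explanation of \emph{why} strongness matters here --- the reduction to single-point moves is precisely what fails for ordinary homotopy --- and, as you observe, it yields the characterization of finite non-strongly-reducible images as those with no dominated vertex as a byproduct.
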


Thus if we wish to enumerate all connected digital images up to strong homotopy equivalence, it suffices to begin with an enumeration of all connected graphs up to isomorphism, and discard those graphs which are strongly reducible. We focus now on determining whether or not a given image is strongly reducible.

We begin by stating ``strong'' analogues of several results from \cite{hmps15}. First we have an analogue of \cite[Lemma 2.8]{hmps15}.
\begin{lem}\label{nonsurjlem}
A finite image $X$ is strongly reducible if and only if $\id_X$ is strongly homotopic to a nonsurjective map.
\end{lem}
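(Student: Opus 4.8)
The plan is to prove both implications directly, with the finiteness of $X$ entering only through a standard fact about self-maps of finite sets.

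For the forward implication, suppose $X$ is strongly reducible, so there is a digital image $Y$ with $\#Y<\#X$ together with continuous maps $f:X\to Y$ and $g:Y\to X$ realizing a strong homotopy equivalence; in particular $g\circ f\cong\id_X$. Then $g\circ f$ is a self-map of $X$ strongly homotopic to the identity, and its image is contained in $g(Y)$, a set of at most $\#Y<\#X$ points. Hence $g\circ f$ is nonsurjective, which is exactly what is claimed.

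For the converse, suppose $\id_X\cong r$ for some nonsurjective continuous $r:X\to X$. The naive approach is to take $Y=r(X)$ with inclusion $i:Y\hookrightarrow X$ and corestriction $\bar r:X\to Y$, hoping that $i$ and $\bar r$ give a strong homotopy equivalence; we get $i\circ\bar r=r\cong\id_X$ for free, but $\bar r\circ i=r|_Y$ need not be (strongly homotopic to) $\id_Y$, since $r$ need not be idempotent. The fix is to replace $r$ by an idempotent power first. Since $X$ is finite, a standard fact gives an integer $N\ge 1$ with $e:=r^N$ idempotent, $e\circ e=e$. Postcomposition with a continuous map preserves $\NP_2(\kappa,c_1)$-continuity, so applying $r$ repeatedly to the given strong homotopy yields $\id_X\cong r\cong r^2\cong\dots\cong r^N=e$ by transitivity of $\cong$. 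Because $r$ is nonsurjective, $Z:=e(X)\subseteq r(X)\subsetneq X$, so $\#Z<\#X$. Now $i:Z\hookrightarrow X$ and the corestriction $\bar e:X\to Z$ (continuous since $Z$ carries the subspace adjacency) satisfy $i\circ\bar e=e\cong\id_X$, while idempotency gives $\bar e\circ i=e|_Z=\id_Z$. Thus $X$ and $Z$ are strong homotopy equivalent with $\#Z<\#X$, so $X$ is strongly reducible.

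The main obstacle is precisely the observation that a nonsurjective map strongly homotopic to $\id_X$ need not be a retraction onto its image, so one cannot immediately conclude that its image is strong homotopy equivalent to $X$; passing to the idempotent power $e=r^N$ is what removes this obstruction. Beyond that, the proof only needs two routine facts to be checked: that some power of a self-map of a finite set is idempotent (the cyclic semigroup generated by $r$ is finite, hence contains an idempotent), and that $\cong$ is respected by postcomposition with continuous maps (if $H$ is $(\NP_2(\kappa,c_1),\lambda)$-continuous and $\phi$ is $(\lambda,\mu)$-continuous, then $\phi\circ H$ is $(\NP_2(\kappa,c_1),\mu)$-continuous).
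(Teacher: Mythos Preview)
Your proof is correct. The direction ``strongly reducible $\Rightarrow$ $\id_X$ strongly homotopic to a nonsurjection'' is argued exactly as in the paper: compose the two maps of the equivalence and count.

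For the other direction you take a genuinely different route from the paper. The paper, given a nonsurjection $f$ with $f\cong\id_X$, sets $Y=f(X)$, lets $i:Y\hookrightarrow X$ be inclusion, and asserts that $f$ and $i$ already form a strong homotopy equivalence between $X$ and $Y$. As written this is problematic: the composite $f\circ i$ is $f|_Y$, and nothing in the hypotheses forces $f|_Y$ to equal (or even be strongly homotopic to) $\id_Y$, since $f$ need not be idempotent. You identified precisely this obstruction and removed it by replacing $f$ with an idempotent power $e=f^N$, using finiteness of $X$ to guarantee such an $N$ exists and using stability of $\cong$ under postcomposition to carry the strong homotopy along. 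With $e$ idempotent one genuinely has $\bar e\circ i=\id_Z$, so the equivalence goes through cleanly. The cost is a slightly longer argument and two small auxiliary facts (existence of an idempotent iterate, compatibility of $\cong$ with postcomposition), both of which you state and which are routine; the payoff is that your argument is airtight where the paper's, taken literally, is not.
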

\begin{proof}
We mimic the proof from \cite{hmps15}. First assume that $\id_X$ is strongly homotopic in one step to a nonsurjection $f:X\to X$, and let $Y=f(X)$ so we may consider $f$ as a map $f:X\to Y$. If $i:Y\to X$ is the inclusion mapping, then $i \circ f = \id_Y$ and $f\circ i = f \cong \id_X$. Thus, $X$ is strong homotopy equivalent to $Y \subsetneq X$, and so $X$ is strongly reducible.

For the converse, assume that $X$ is strongly reducible, so $X$ is strong homotopy equivalent to an image $Z$ with $\#Z < \#X$. Thus there are maps $f:X\to Z$ and $g:Z \to X$ with $g\circ f \cong \id_X$. But we have
\[ \#g(f(X)) \le \#f(X) \le \#Z < \#X, \]
and so $g\circ f$ must be nonsurjective, and thus $\id_X$ is strongly homotopic to a nonsurjection.
\end{proof}

The lemma above can be strengthened: the homotopy can be taken to be punctuated, and in one step. 
\begin{lem}\label{nonsurjlem1steppunc}
A finite image $X$ is strongly reducible if and only if $\id_X$ is homotopic in one step by punctuated homotopy to a nonsurjective map.
\end{lem}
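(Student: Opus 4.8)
The plan is to derive this strengthening of Lemma~\ref{nonsurjlem} by combining that lemma with Theorem~\ref{punctuatedthm}, the point being that a punctuated homotopy starting at the (surjective) identity map is already nonsurjective after its very first nontrivial step. For the ``if'' direction I would argue as follows: a punctuated one-step homotopy from $\id_X$ to a nonsurjective $f$ is in particular a strong homotopy (any punctuated homotopy is strong), so $\id_X \cong f$ with $f$ nonsurjective, and Lemma~\ref{nonsurjlem} immediately gives that $X$ is strongly reducible.

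For the ``only if'' direction, assume $X$ is strongly reducible (and nonempty; the empty image is not strongly reducible, so there is nothing to prove there). By Lemma~\ref{nonsurjlem} there is a nonsurjective $f\colon X\to X$ with $\id_X\cong f$, and since $X$ is finite, Theorem~\ref{punctuatedthm} provides a punctuated homotopy $H\colon X\times[0,k]_\Z\to X$ with $H_0=\id_X$ and $H_k=f$, where $H_t\colon X\to X$ denotes the stage map at time $t$. Because $\id_X$ is surjective and $f$ is not, $H_0\neq H_k$, so there is a least index $t^\ast\ge 1$ with $H_{t^\ast}\neq H_0$; by minimality $H_{t^\ast-1}=\id_X$. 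Since $H$ is punctuated, $H_{t^\ast}$ agrees with $\id_X$ except at a single point $x_0$, where $H_{t^\ast}(x_0)=y_0\neq x_0$; hence the image of $H_{t^\ast}$ is $(X\setminus\{x_0\})\cup\{y_0\}=X\setminus\{x_0\}$, so $H_{t^\ast}$ is nonsurjective. Finally, I would restrict $H$ to the time interval $[t^\ast-1,t^\ast]_\Z$ and reindex it to $[0,1]_\Z$ via the obvious $c_1$-isomorphism; the result is a punctuated homotopy in one step from $\id_X$ to the nonsurjective map $H_{t^\ast}$.

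The one genuinely new observation is that the first nontrivial stage of a punctuated homotopy out of the identity must be nonsurjective, because altering one value of a bijection always deletes a point from the image. Everything else is bookkeeping: one needs that the restriction of a punctuated homotopy to a subinterval of time is again a punctuated homotopy (clear, since $[t^\ast-1,t^\ast]_\Z$ inherits $c_1$-adjacency and the $\NP$-adjacencies on the product restrict accordingly) and that the trivial cases ($X=\emptyset$, or $f=\id_X$, which cannot occur here) are dispatched. I therefore do not expect a substantive obstacle; the content has essentially been extracted already by Lemma~\ref{nonsurjlem} and Theorem~\ref{punctuatedthm}.
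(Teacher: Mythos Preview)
Your argument is correct and actually cleaner than the paper's own proof. Both proofs start the same way, invoking Lemma~\ref{nonsurjlem} and Theorem~\ref{punctuatedthm} to obtain a punctuated homotopy $H$ from $\id_X$ to a nonsurjection. The difference is in how the one-step punctuated homotopy is then extracted. The paper truncates $H$ at the \emph{last} surjective stage $g=H_{k-1}$, observes that a continuous surjective selfmap of a finite image is an isomorphism, and then shows that $f\circ g^{-1}$ is a nonsurjection agreeing with $\id_X$ except at one point; the homotopy $L$ from $\id_X$ to $f\circ g^{-1}$ is the desired one-step punctuated homotopy. Your approach instead looks at the \emph{first} stage $H_{t^\ast}$ that differs from $\id_X$: since the previous stage is still $\id_X$ and the homotopy is punctuated, $H_{t^\ast}$ is obtained from the identity by altering a single value, which immediately forces nonsurjectivity. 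This avoids the isomorphism-inversion trick entirely and makes the conclusion essentially a one-line observation. The paper's route has the virtue of mirroring the structure of \cite[Lemma~2.9]{hmps15}, but your route is shorter and uses nothing beyond the pigeonhole-type fact that perturbing one value of a bijection on a finite set destroys surjectivity.
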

\begin{proof}
We use a variation on the proof of \cite[Lemma 2.9]{hmps15}. If $\id_X$ is homotopic by punctuated homotopy in one step to a nonsurjection, then $X$ is strongly reducible by Lemma \ref{nonsurjlem}. So we need only show that if $X$ is strongly reducible, then $\id_X$ is homotopic by punctuated homtopy in one step to a nonsurjection.

If $X$ is strongly reducible then by Lemma \ref{nonsurjlem} we have some nonsurjection $f:X\to X$ with $\id_X \cong f$. Let $H:X\times [0,k]_\Z \to X$ be the strong homotopy from $\id_X$ to $f$, and by Theorem \ref{punctuatedthm} we may assume that $H$ is punctuated. Without loss of generality assume that $H(x,t)$ is surjective for all $t<k$ and nonsurjective only for $t=k$. Let $g:X\to X$ be the surjective map $g(x) = H(x,k-1)$, so $g$ is homotopic by punctuated homotopy to $f$ in one step. Since $g$ is a continuous surjective selfmap, it is a bijection and thus an isomorphism by \cite[Lemma 2.3]{hmps15}. 

Now consider the homotopy $L:X\times [0,1]_\Z \to X$ where $L(x,0)=\id_X$ and $L(x,1)=f(g^{-1}(x))$. Because $f$ is nonsurjective, the map $f\circ g^{-1}$ is nonsurjective, and thus it remains only to show that $L$ is a punctuated homotopy. That is, we must show that $f\circ g^{-1}$ fixes all but one point of $X$.

Since $g$ is homotopic to $f$ in one step by punctuated homotopy, we have $f(x)=g(x)$ for all $x\in X$ except for one point. Since $g$ is an isomorphism, we can write any point $x \in X$ as $x=g^{-1}(y)$ for some $y$. Thus we have $f(g^{-1}(y)) = g(g^{-1}(y) = y$ for all $y\in X$ except for the point $b = g^{-1}(a)$. Thus $L$ is a punctuated one step homotpy from $\id_X$ to $f\circ g^{-1}$ as desired.
\end{proof}

The catalog in \cite{hmps15} was obtained with a mixture of computer search and by-hand analysis. The following lemma was used as part of the computer search. Given $x\in X$, let $N^*(x)\subseteq X$ be the set of all $y\in X$ with $y\adjeq x$. 
\begin{lem}[\cite{hmps15}, Lemma 4.2]\label{subneighborlem}
If there exists distinct $x,y\in X$ such that $N^*(x)\subseteq N^*(y)$, then $X$ is reducible.
\end{lem}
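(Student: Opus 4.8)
The plan is to realize the reduction by the obvious ``fold'' map that collapses $x$ onto $y$. Define $f\colon X\to X$ by $f(x)=y$ and $f(z)=z$ for all $z\neq x$. Since $x\in N^*(x)\subseteq N^*(y)$ and $x\neq y$, we have $x\adj y$, so $f\neq\id_X$; moreover $f(z)=x$ forces $z=x$ while $f(x)=y\neq x$, so $x$ is not in the image of $f$ and $f$ is nonsurjective. In view of Lemma~\ref{nonsurjlem} (or its sharper form Lemma~\ref{nonsurjlem1steppunc}), it therefore suffices to prove that $f$ is continuous and that $\id_X\cong f$.

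For continuity, take $a\adj b$; we must check $f(a)\adjeq f(b)$. If neither $a$ nor $b$ equals $x$, then $f(a)=a\adj b=f(b)$. If $b=x$ (hence $a\neq x$), then $a\adj x$ gives $a\in N^*(x)\subseteq N^*(y)$, so $f(a)=a\adjeq y=f(b)$; the case $a=x$ is symmetric, and both cannot equal $x$ since $a\adj b$ implies $a\neq b$. Thus $f$ is continuous. For the strong homotopy, I would invoke Theorem~\ref{stronghtpadjs}: it suffices to check that whenever $a\adj a'$ we have $\id_X(a)=a\adjeq f(a')$. If $a'\neq x$ then $f(a')=a'$ and $a\adj a'$ gives $a\adjeq a'$; if $a'=x$ then $a\adj x$ gives $a\in N^*(x)\subseteq N^*(y)$, so $a\adjeq y=f(a')$. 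Hence $\id_X$ is strongly homotopic to $f$ in one step. One can also note this one-step homotopy changes only the value at the single point $x$, so it is punctuated, matching Lemma~\ref{nonsurjlem1steppunc}. Combining continuity, strong homotopy, and nonsurjectivity of $f$, the cited lemma yields that $X$ is strongly reducible, and in particular reducible.

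I do not expect a genuine obstacle here: every step is a short case check. The only point requiring care is the direction of the hypothesis $N^*(x)\subseteq N^*(y)$ — one must redirect the vertex with the \emph{smaller} starred neighborhood ($x$) to the one with the larger neighborhood ($y$), since it is precisely the inclusion $N^*(x)\subseteq N^*(y)$ that is consumed in both the continuity verification and the application of Theorem~\ref{stronghtpadjs}. (Note also that, being closed under ``$\adjeq$'' at $x$ itself, the inclusion automatically forces $x\adj y$, which is what guarantees the fold map is well defined as a continuous map and that $x$ leaves the image.)
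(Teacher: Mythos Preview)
Your proof is correct and follows essentially the same approach the paper takes: the fold map $f$ sending $x\mapsto y$ and fixing everything else is exactly the map used in the forward direction of Theorem~\ref{strongsubnbhd} (which the paper says follows \cite[Lemma~4.2]{hmps15}), and the verification via Theorem~\ref{stronghtpadjs} is identical. You are in fact proving the stronger conclusion that $X$ is \emph{strongly} reducible---which is precisely what the paper establishes in Theorem~\ref{strongsubnbhd}---and your explicit continuity check for $f$ is a detail the paper's argument leaves implicit.
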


The converse is not true: for example $C_4$ is reducible but does not satisfy the condition of Lemma \ref{subneighborlem}. The computer search used in \cite{hmps15} and \cite{stae15} thus had to rely in some special cases on a brute-force checking of all maps homotopic to the identity in one step, to determine if any were nonsurjective.

For the case of strong homotopy, however, we can avoid the cumbersome brute-force search because the converse to Lemma \ref{subneighborlem} will hold:
\begin{thm}\label{strongsubnbhd}
Let $X$ be a finite digital image. Then $X$ is strongly reducible if and only if there exists distinct $x,y\in X$ such that $N^*(x)\subseteq N^*(y)$.
\end{thm}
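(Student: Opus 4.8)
The plan is to prove both implications directly, leaning on the characterizations of strong reducibility already in hand: Lemma~\ref{nonsurjlem} for the ``if'' direction and Lemma~\ref{nonsurjlem1steppunc} for the ``only if'' direction.

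For the ``if'' direction, suppose distinct $x,y\in X$ satisfy $N^*(x)\subseteq N^*(y)$. I would introduce the ``folding map'' $r:X\to X$ defined by $r(x)=y$ and $r(z)=z$ for all $z\neq x$. This map is nonsurjective, since nothing maps to $x$: every point other than $x$ is fixed, and $r(x)=y\neq x$. The two things to verify are that $r$ is continuous and that $\id_X\cong r$ in one step. Continuity: if $a\adj b$ with neither equal to $x$, then $r(a)=a\adj b=r(b)$; if $a=x$ and $b\neq x$, then $b\in N^*(x)\subseteq N^*(y)$, so $r(b)=b\adjeq y=r(x)$. For the strong homotopy, by Theorem~\ref{stronghtpadjs} it suffices to check $\id(a)\adjeq r(b)$ whenever $a\adj b$: when $b\neq x$ this is just $a\adj b$, and when $b=x$ we have $a\in N^*(x)\subseteq N^*(y)$, hence $a\adjeq y=r(x)$. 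Thus $\id_X$ is strongly homotopic to the nonsurjection $r$, and $X$ is strongly reducible by Lemma~\ref{nonsurjlem}.

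For the ``only if'' direction, suppose $X$ is strongly reducible. By Lemma~\ref{nonsurjlem1steppunc} there is a nonsurjective $f:X\to X$ which is homotopic to $\id_X$ in one step by a punctuated homotopy $H$. Since a one-step punctuated homotopy from the identity changes at most one point, there is a point $x$ with $f(z)=z$ for all $z\neq x$, and $f(x)\neq x$ (otherwise $f=\id_X$ would be surjective). Set $y=f(x)$; then $y\neq x$, and continuity of $H$ in the time coordinate gives $x=H(x,0)\adjeq H(x,1)=y$, hence $x\adj y$. I then claim $N^*(x)\subseteq N^*(y)$. Take $z\adjeq x$. If $z=x$, then $z\adjeq y$ since $x\adj y$. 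If $z\adj x$ with $z\neq x$, then continuity of $f$ gives $f(z)\adjeq f(x)$, i.e.\ $z\adjeq y$. In either case $z\in N^*(y)$, which proves the containment.

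Neither direction involves a difficult computation; the substantive point — and the only place the argument would fail for ordinary (non-strong) homotopy — is in the ``if'' direction, where one must check that the folding map $r$ is \emph{strongly} homotopic to $\id_X$. This requires $\id(a)\adjeq r(b)$ for \emph{all} adjacent pairs $(a,b)$, not just the pointwise condition $\id(a)\adjeq r(a)$, and it is exactly the hypothesis $N^*(x)\subseteq N^*(y)$, applied to a neighbour $a$ of the collapsed point $x$, that supplies this. The same adjacency bookkeeping, read in reverse, is what lets one extract the neighbourhood containment from a punctuated one-step strong homotopy in the ``only if'' direction.
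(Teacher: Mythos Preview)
Your proof is correct and follows essentially the same route as the paper's: the same folding map for the ``if'' direction, and the same invocation of Lemma~\ref{nonsurjlem1steppunc} for the ``only if'' direction (you use continuity of $f$ where the paper appeals to Theorem~\ref{stronghtpadjs}, but since $f$ fixes every point except $x$ these two checks coincide).

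One correction to your closing commentary: you have the location of the essential ``strong'' hypothesis backwards. The ``if'' direction already holds for ordinary homotopy --- that is exactly Lemma~\ref{subneighborlem} --- so nothing would fail there. It is the ``only if'' direction that collapses without strongness: $C_4$ is reducible yet has no pair $x,y$ with $N^*(x)\subseteq N^*(y)$. In your argument the strong hypothesis is doing its work inside the call to Lemma~\ref{nonsurjlem1steppunc}, which produces a \emph{punctuated} one-step homotopy; no analogous lemma holds for ordinary reducibility, and without ``punctuated'' your extraction of $x$ and $y$ would break down.
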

\begin{proof}
First assume that there are distinct $x,y\in X$ with $N^*(x)\subseteq N^*(y)$. Then we follow the proof of \cite[Lemma 4.2]{hmps15} to show that $X$ is strongly reducible. Let $f:X\to X-\{x\}$ be the map which sends $x$ to $y$ and fixes all other points. 

We will show that $f$ is strongly homotopic to $\id_X$ using Theorem \ref{stronghtpadjs}. If we take any $a, b\in X$ with $a \adj b$, we must show that $a\adjeq f(b)$. When $b\neq x$ we have $f(b)=b$ and so $a\adjeq f(b)$ since $a\adj b$. When $b=x$, since $N^*(x)\subseteq N^*(y)$ we have $a \adjeq y = f(x) = f(b)$ as desired. Thus by Theorem \ref{stronghtpadjs} we have $f\cong \id_X$, and since $f$ is not a surjection, $X$ is strongly reducible.

Now we prove the converse. Assume that $X$ is strongly reducible, so by Theorem \ref{nonsurjlem1steppunc} there is a nonsurjection $f$ with $f\cong \id_X$ by a punctuated one step homotopy. Since the homotopy is punctuated and in one step, there is one point $a$ with $f(a)\adj a$ but $f(a)\neq a$, and $f(x)=x$ for all other points. 

Let $b=f(a)$, so $b\adj a$ but $b\neq a$. To show that $N^*(a)\subseteq N^*(b)$, 
take any $x\adjeq a$, and we must show $x\adjeq b$. Since $x\adjeq a$ and $f$ is strongly homotopic to the identity in one step, by Theorem \ref{stronghtpadjs} we have $x \adjeq f(a)=b$ as desired.
\end{proof}

The condition of Theorem \ref{strongsubnbhd} can be easily checked by computer. Starting with a list of all connected simple graphs, we remove those which are strongly reducible. For images of 6 points or fewer, the complete catalog is shown in Figure \ref{catalog}. Source code in SageMath for the computer search is available at the author's website.\footnote{\url{http://faculty.fairfield.edu/cstaecker}}

\newcommand{\ra}{.4}
\newcommand{\rpoly}[2][0]{%
\foreach \x in {1,...,#2} {
\node at ({\x*360/#2 + #1}:\ra) [vertex] {};
\draw ({\x*360/#2 + #1}:\ra) -- ({(\x+1)*360/#2 + #1}:\ra);
}}

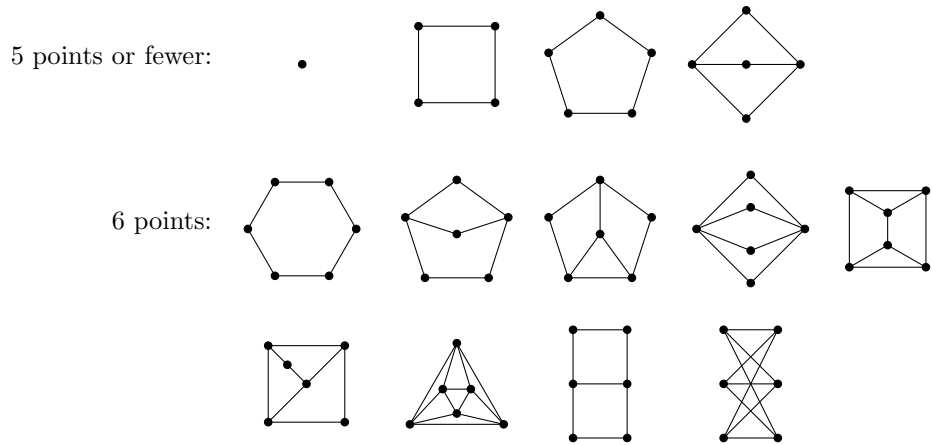
\begin{figure}
\def\arraystretch{1.5}
\makebox[\textwidth][c]{
\begin{tabular}{rcccccc}
5 points or fewer: &
\raisebox{-0.5\height}{\begin{tikzpicture}[scale=1.8]
\node at (0.5,0.5) [vertex] {};
\end{tikzpicture}}
\vspace{.05in} 
&
\raisebox{-0.5\height}{\begin{tikzpicture}[scale=1.8]
\rpoly[45]{4}
\end{tikzpicture}}
&
\raisebox{-0.5\height}{\begin{tikzpicture}[scale=1.8]
\rpoly[18]{5}
\end{tikzpicture}}
& 
\raisebox{-0.5\height}{\begin{tikzpicture}[scale=1.8]
\rpoly{4}
\node at (0,0) [vertex]{};
\draw (-1*\ra,0) -- (1*\ra,0);
\end{tikzpicture}}
\vspace{.2in} \\ 
6 points: &
\raisebox{-0.5\height}{\begin{tikzpicture}[scale=1.8]
\rpoly{6}
\end{tikzpicture}}
\vspace{.2in}
&
\raisebox{-0.5\height}{\begin{tikzpicture}[scale=1.8]
\rpoly[18]{5}
\node at (0,0) [vertex] {};
\draw (162:\ra) -- (0,0) -- (18:\ra);
\end{tikzpicture}}
& 
\raisebox{-0.5\height}{\begin{tikzpicture}[scale=1.8]
\rpoly[18]{5}
\node at (0,0) [vertex] {};
\foreach \t in {90,234,306} {
 \draw (0,0) -- (\t:\ra);
}
\end{tikzpicture}}
& 
\raisebox{-0.5\height}{\begin{tikzpicture}[scale=1.8]
\rpoly{4}
\node at (0,.4*\ra) [vertex] {};
\node at (0,-.4*\ra) [vertex] {};
\draw (180:\ra) -- (0,.4*\ra) -- (0:\ra);
\draw (180:\ra) -- (0,-.4*\ra) -- (0:\ra);
\end{tikzpicture}}
&
\raisebox{-0.5\height}{\begin{tikzpicture}[scale=1.8]
\rpoly[45]{4}
\foreach \x in {-1,1} {
 \node at (0,\x*.3*\ra) [vertex] {};
 \draw (45:\x*\ra) -- (0,\x*.3*\ra) -- (135:\x*\ra);
}
\draw (0,-.3*\ra) -- (0,.3*\ra);
\end{tikzpicture}} \\
& 
\raisebox{-0.5\height}{\begin{tikzpicture}[scale=1.8]
\rpoly[45]{4}
\node at (0,0) [vertex] {};
\node at (135:.5*\ra) [vertex] {};
\draw (-135:\ra) -- (45:\ra);
\draw (135:\ra) -- (0,0);
\end{tikzpicture}}
& 
\raisebox{-0.5\height}{\begin{tikzpicture}[scale=1.8]
\rpoly[90]{3}
\foreach \t in {0,120,240} {
 \node at (\t+30:.3*\ra) [vertex] {};
 \draw (\t+90:\ra) -- (\t+30:.3*\ra) -- (\t-30:\ra);
 \draw (\t+30:.3*\ra) -- (\t+150:.3*\ra);
}
\end{tikzpicture}}
&   
\raisebox{-0.5\height}{\begin{tikzpicture}[scale=1.8]
\draw[step=\ra] (0,0) grid (1*\ra,2*\ra);
\foreach \x in {0,1} {
 \foreach \y in {0,1,2} {
  \node at (\x*\ra,\y*\ra) [vertex] {};
  }
}
\end{tikzpicture}}
&
\raisebox{-0.5\height}{\begin{tikzpicture}[scale=1.8]
\foreach \x in {0,1} {
 \foreach \y in {0,1,2} {
  \node at (\x*\ra,\y*\ra) [vertex] {};
 }
}
\foreach \y in {0,1,2} {
 \foreach \z in {0,1,2} {
  \draw (0,\y*\ra) -- (1*\ra,\z*\ra);
 }
}
\end{tikzpicture}}
\\ 
\end{tabular}}
\caption{All digital images on 6 points or fewer, up to strong homotopy equivalence\label{catalog}}
\end{figure}

For images of 7, 8, and 9 points, the results of the computer search are as follows. Let $c(n)$ be the number of connected digital images on $n$ points which are not reducible. Let $d(n)$ be the number of connected digital images which are not strongly reducible. We have:
\[ 
\begin{tabular}{r|ccccccccc}
$n$ & 1 & 2 & 3 & 4 & 5 & 6 & 7 & 8 & 9 \\
\hline
$c(n)$ & 1 & 0 & 0 & 0 & 1 & 1 & 3 & 28 & 547 \\
$d(n)$ & 1 & 0 & 0 & 1 & 2 & 9 & 46 & 507 & 11800
\end{tabular}
\]
The first row is from \cite{stae15}, and is OEIS sequence A248571.

\section{Pointed strong homotopy equivalence}\label{pointedsec}
The study of pointed digital images and pointed homotopy arises naturally in the study of the fundamental group of a digital image, see \cite{boxe05}. 
A \emph{pointed digital image} $(X,x_0)$ is a digital image with some specific chosen point $x_0\in X$. A \emph{pointed continuous function} $f:(X,x_0) \to (Y,y_0)$ is a function which is continuous and $f(x_0) = y_0$. Two pointed continuous functions are \emph{pointed homotopic} when there is a homotopy from $f$ to $g$ through pointed functions. Two pointed images $(X,x_0)$ and $(Y,y_0)$ are \emph{pointed homotopy equivalent} when there are pointed continuous maps $f:(X,x_0) \to (Y,y_0)$ and $g:(Y,y_0) \to (X,x_0)$ such that $g\circ f$ is pointed homotopic to $\id_X$ and $f\circ g$ is pointed homotopic to $\id_Y$. We will say two pointed images $(X,x_0)$ and $(Y,y_0)$ are \emph{pointed strong homotopy equivalent} when there are pointed continuous maps $f:(X,x_0) \to (Y,y_0)$ and $g:(Y,y_0) \to (X,x_0)$ such that $g\circ f$ is pointed homotopic by a strong homotopy to $\id_X$ and $f\circ g$ is pointed homotopic by a strong homotopy to $\id_Y$.

In \cite{hmps15} an example was given of two digital images which are homotopy equivalent but not pointed homotopy equivalent for any choice of points:
\[ 
\begin{tikzpicture}[scale=1.4]
\rpoly[18]{5} 
\end{tikzpicture} 
\qquad
\begin{tikzpicture}[scale=1.4]
\rpoly[18]{5} 
\node at (0,0) [vertex] {};
\draw (162:\ra) -- (0,0) -- (18:\ra);
\end{tikzpicture} 
\]
In fact these two images are not strong homotopy equivalent-- we can see both appear in Figure \ref{catalog}. Thus they also are not pointed strong homotopy equivalent.

It is natural to ask whether there are examples of pointed digital images $(X,x_0)$ and $(Y,y_0)$ which are strong homotopy equivalent but not pointed strong homotopy equivalent. We will give such an example in Example \ref{pointedexl}.

First we prove a theorem which will help with the example. We say a digital image is \emph{strongly contractible} when it is strong homotopy equivalent to a point.

Given a finite image $X$ with $\#X=n$, let $(x_1,\dots, x_n)$ be an ordered enumeration of the points of $X$. For $i \in \{1,\dots,n\}$, let $X_i \subseteq X$ be the set $X_i = \{x_{i},\dots, x_n\}$. For $x\in X_i$, let $N^*_{X_i}(x)$ be the set of all $y\in X_i$ with $y\adjeq x$. 

We say $(x_1,\dots, x_n)$ is a \emph{strong contraction ordering of $X$} when for each $i<n$, there is some $j>i$ with $N_{X_i}^*(x_i) \subseteq N_{X_i}^*(x_j)$. Such an ordering of $X$ may or may not exist. 

\begin{thm}\label{orderthm}
A finite digital image $X$ is strongly contractible if and only if there is a strong contraction ordering of $X$.
\end{thm}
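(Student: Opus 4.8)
The plan is to prove both implications by induction on $n=\#X$, using Theorem~\ref{strongsubnbhd} — not only for $X$ itself but for each of the sub-images $X_i$ (regarded as a digital image with the adjacency inherited from $X$, so that the ``$N^*$'' of a point in $X_i$ is precisely $N^*_{X_i}$ of that point) — together with the fact, stated earlier, that strong homotopy equivalence is an equivalence relation.

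For the ``if'' direction, suppose $(x_1,\dots,x_n)$ is a strong contraction ordering. For each $i<n$ there is some $j>i$ with $N^*_{X_i}(x_i)\subseteq N^*_{X_i}(x_j)$, and since $x_i\neq x_j$, Theorem~\ref{strongsubnbhd} applied inside $X_i$ shows that $X_i$ is strongly reducible. I would then read off from the proof of that theorem the sharper fact that the map $X_i\to X_i-\{x_i\}$ sending $x_i\mapsto x_j$ and fixing all other points is a strong homotopy equivalence onto $X_i-\{x_i\}=X_{i+1}$. Chaining these equivalences for $i=1,\dots,n-1$ and invoking transitivity of strong homotopy equivalence yields that $X=X_1$ is strong homotopy equivalent to $X_n=\{x_n\}$, so $X$ is strongly contractible.

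For the ``only if'' direction, I would induct on $n$. If $n=1$, the single-element enumeration vacuously satisfies the defining condition. If $n>1$ and $X$ is strongly contractible, then $X$ is in particular strongly reducible, so by Theorem~\ref{strongsubnbhd} there are distinct points $x,y\in X$ with $N^*(x)\subseteq N^*(y)$. Set $x_1=x$. As in the first paragraph, $X$ is strong homotopy equivalent to $X-\{x_1\}$, hence by transitivity $X-\{x_1\}$ is again strongly contractible; by the inductive hypothesis it has a strong contraction ordering $(x_2,\dots,x_n)$, whose defining condition is witnessed by the sets $X_i=\{x_i,\dots,x_n\}$ for $i\ge 2$. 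I then claim $(x_1,x_2,\dots,x_n)$ is a strong contraction ordering of $X$: the required inclusions for $i\ge 2$ are inherited verbatim from the ordering of $X-\{x_1\}$, and for $i=1$ we have $X_1=X$, so $N^*_{X_1}(x_1)=N^*(x)\subseteq N^*(y)=N^*_{X_1}(y)$, and since $y\neq x_1$ we may write $y=x_j$ for some $j>1$, supplying the needed index.

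The argument is essentially bookkeeping, and I do not expect a genuine obstacle. The one point that needs care is the passage from the bare statement of Theorem~\ref{strongsubnbhd} (that $X$ is strongly reducible) to the more precise statement used in the chaining — namely that the inclusion $N^*(x)\subseteq N^*(y)$ produces a strong homotopy equivalence of $X$ onto $X-\{x\}$, deleting exactly the point $x$ — so that the deletions can be iterated coherently all the way down to a single point. This is immediate from inspecting the construction in that proof, but it should be pointed out explicitly.
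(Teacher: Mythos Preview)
Your proposal is correct and follows essentially the same approach as the paper: both directions proceed by induction on $\#X$, invoking Theorem~\ref{strongsubnbhd} (and the specific map built in its proof) to pass between $X$ and $X-\{x_1\}$, with strong homotopy equivalence transitivity doing the rest. Your ``if'' direction unrolls the induction into an explicit chain $X_1\to X_2\to\cdots\to X_n$, and you are somewhat more careful than the paper in verifying that the ordering on $X-\{x_1\}$ extends to one on $X$, but the substance is the same.
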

\begin{proof}
First we assume $X$ is strongly contractible. We will prove that there is a strong contraction ordering by induction on $n=\#X$.  If $n = 1$ then clearly there is a strong contraction ordering. For the inductive step, if $X$ is strongly contractible then it is strongly reducible, and thus by Theorem \ref{strongsubnbhd} there is some $x_1\in X$ and $y\in X$ with $N^*(x_1) \subseteq N^*(y)$, and by the proof of Theorem \ref{strongsubnbhd} $X$ strongly reduces to $X - \{x_1\}$. By induction, $X-\{x_1\}$ has a strong contraction ordering $(x_2,\dots,x_n)$, and then $(x_1,x_2,\dots,x_n)$ is a strong contraction ordering of $X$.

Now we assume that $X$ has a strong contraction ordering $(x_1,\dots,x_n)$. We use induction on $n$ to prove that $X$ is strongly contractible. When $n=1$ then clearly $X$ is contractible. For the inductive case note that $(x_2,\dots,x_n)$ is a strong contraction ordering of $X-\{x_1\}$, and so by induction $X-\{x_1\}$ is strongly contractible. Since $(x_1,\dots,x_n)$ is a strong contraction ordering there is some $j>1$ with $N^*(x_1) \subseteq N^*(x_j)$, and thus by Theorem \ref{strongsubnbhd} and its proof, $X$ is strong homotopy equivalent to $X-\{x_1\}$. Since $X-\{x_1\}$ is strongly contractible, also $X$ is strongly contractible as desired.
\end{proof}

\begin{figure}
\[
\begin{tikzpicture}
\node at (3,0) [vertex, label=right:{$x_1$}] {};
\node at (3,3) [vertex, label=right:{$x_2$}] {};
\node at (0,3) [vertex, label=left:{$x_3$}] {};
\node at (0,0) [vertex, label=left:{$x_4$}] {};
\node at (2,2) [vertex, label=right:{$x_5$}] {};
\node at (1,2) [vertex, label=left:{$x_6$}] {};
\node at (1,1) [vertex, label=left:{$x_7$}] {};
\node at (2,1) [vertex, label=right:{$x_8$}] {};
\draw (0,0) -- (3,0) -- (3,3) -- (0,3) -- (0,0);
\draw (1,1) grid (2,2);
\draw (0,0) -- (3,3);
\draw (0,3) -- (1,2);
\draw (2,1) -- (3,0);
\draw (0,0) -- (1,2);
\draw (0,3) -- (2,2);
\draw (3,3) -- (2,1);
\draw (0,0) -- (2,1);
\draw (1,1) -- (3,0);
\end{tikzpicture}
\]
\caption{A pointed digital image which is strong homotopy equivalent to a point, but not pointed strong homotopy equivalent to a point.\label{pointedfig}}
\end{figure}
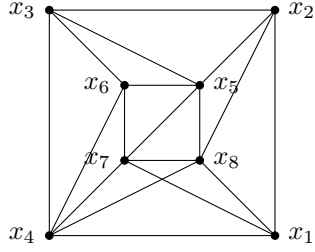

Now we present our example of a pointed image which is strongly contractible but not pointed strongly contractible. We do not know if there is a simpler example, or one which can be embedded into $\Z^2$ or $\Z^3$ with some $c_u$-adjacency.
 
\begin{exl}\label{pointedexl}
Let $(X,x_1)$ be the pointed digital image shown in Figure \ref{pointedfig}. This digital image is strong homotopy equivalent to a point, but not by a pointed strong homotopy. To show that $X$ is strong homotopy equivalent to a point, by Theorem \ref{orderthm} it suffices to demonstrate a strong contraction ordering of $X$. Using the labeling of points in Figure \ref{pointedfig}, we claim that $(x_1,x_2,\dots,x_8)$ is a strong contraction ordering of $X$.

The various $X_i$ are shown in Figure \ref{Xifig}. 
It is routine to verify the following inclusions:
\begin{align*}
N^*_{X_1}(x_1) \subseteq N^*_{X_1}(x_8), \, N^*_{X_2}(x_2) \subseteq N^*_{X_2}(x_5), \, N^*_{X_3}(x_3) \subseteq N^*_{X_3}(x_6), \\
N^*_{X_4}(x_4) \subseteq N^*_{X_4}(x_7), \, N^*_{X_5}(x_5) \subseteq N^*_{X_5}(x_7), \, N^*_{X_6}(x_6) \subseteq N^*_{X_6}(x_7), \\
N^*_{X_7}(x_7) \subseteq N^*_{X_7}(x_8),
\end{align*}
and thus $(x_1,x_2,\dots,x_8)$ is a strong contraction ordering, and so $X$ is strongly contractible.
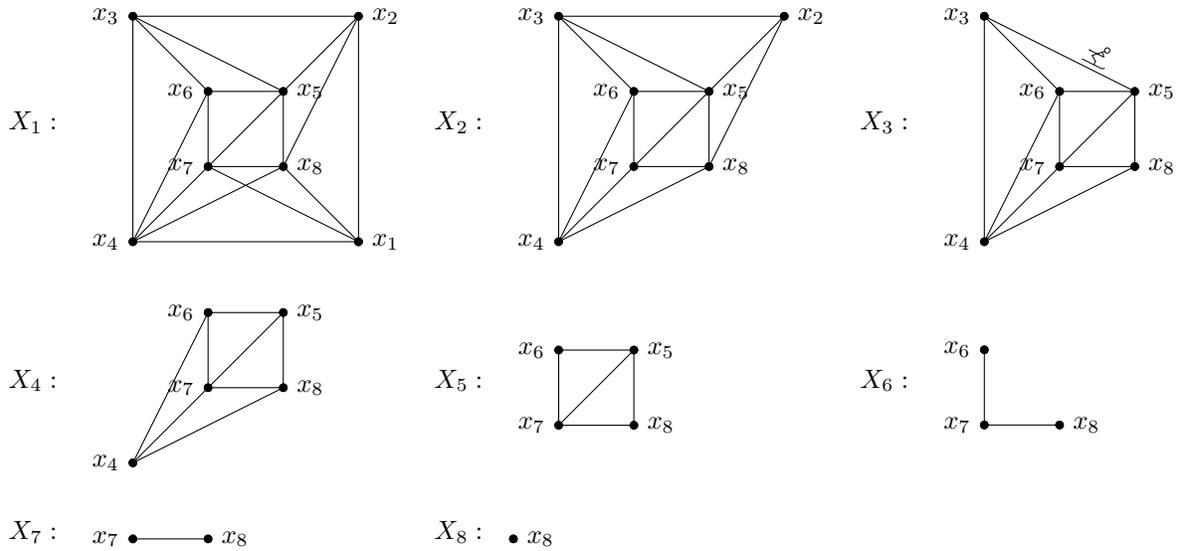
\begin{figure}
\makebox[\textwidth][c]{
$\begin{array}{rlrlrl}
X_1:& \raisebox{-0.5\height}{\begin{tikzpicture}
\node at (3,0) [vertex, label=right:{$x_1$}] {};
\node at (3,3) [vertex, label=right:{$x_2$}] {};
\node at (0,3) [vertex, label=left:{$x_3$}] {};
\node at (0,0) [vertex, label=left:{$x_4$}] {};
\node at (2,2) [vertex, label=right:{$x_5$}] {};
\node at (1,2) [vertex, label=left:{$x_6$}] {};
\node at (1,1) [vertex, label=left:{$x_7$}] {};
\node at (2,1) [vertex, label=right:{$x_8$}] {};
\foreach \x in {0,3} {
 \foreach \y in {0,3} {
  \node at (\x,\y) [vertex] {};
 }
}
\foreach \x in {1,2} {
 \foreach \y in {1,2} {
  \node at (\x,\y) [vertex] {};
 }
}
\node at (3,0) [vertex] {};
\draw (0,0) -- (3,0) -- (3,3) -- (0,3) -- (0,0);
\draw (1,1) grid (2,2);
\draw (0,0) -- (3,3);
\draw (0,3) -- (1,2);
\draw (2,1) -- (3,0);
\draw (0,0) -- (1,2);
\draw (0,3) -- (2,2);
\draw (3,3) -- (2,1);
\draw (0,0) -- (2,1);
\draw (1,1) -- (3,0);
\end{tikzpicture}}
& 
X_2:& \raisebox{-0.5\height}{\begin{tikzpicture}
\node at (3,3) [vertex, label=right:{$x_2$}] {};
\node at (0,3) [vertex, label=left:{$x_3$}] {};
\node at (0,0) [vertex, label=left:{$x_4$}] {};
\node at (2,2) [vertex, label=right:{$x_5$}] {};
\node at (1,2) [vertex, label=left:{$x_6$}] {};
\node at (1,1) [vertex, label=left:{$x_7$}] {};
\node at (2,1) [vertex, label=right:{$x_8$}] {};
\node at (0,0) [vertex] {};
\node at (0,3) [vertex] {};
\node at (3,3) [vertex] {};
\foreach \x in {1,2} {
 \foreach \y in {1,2} {
  \node at (\x,\y) [vertex] {};
 }
}
\draw (0,0) -- (0,3) -- (3,3);
\draw (0,3) -- (1,2);
\draw (1,1) grid (2,2);
\draw (0,0) -- (3,3);
\draw (0,0) -- (1,2);
\draw (0,3) -- (2,2);
\draw (3,3) -- (2,1);
\draw (0,0) -- (2,1);
\end{tikzpicture}}
&  
X_3:& \raisebox{-0.5\height}{\begin{tikzpicture}
\node at (0,3) [vertex, label=left:{$x_3$}] {};
\node at (0,0) [vertex, label=left:{$x_4$}] {};
\node at (2,2) [vertex, label=right:{$x_5$}] {};
\node at (1,2) [vertex, label=left:{$x_6$}] {};
\node at (1,1) [vertex, label=left:{$x_7$}] {};
\node at (2,1) [vertex, label=right:{$x_8$}] {};
\node at (0,0) [vertex] {};
\node at (0,3) [vertex] {};
\foreach \x in {1,2} {
 \foreach \y in {1,2} {
  \node at (\x,\y) [vertex] {};
 }
}
\draw (0,0) -- (0,3);
\draw (0,3) -- (1,2);
\draw (1,1) grid (2,2);
\draw (0,0) -- (2,2);
\draw (0,0) -- (1,2);
\draw (0,3) -- (2,2);
\draw (0,0) -- (2,1);
\begin{scope}[transform canvas={shift={(1.5,2.5)},scale=.05,rotate=-26.6}]
\coordinate (a) at (-3,-3);
\coordinate (b) at (.5,-3);
\coordinate (c) at (3,-3);
\coordinate (d) at (3.5,-2.5);
\coordinate (e) at (.5,-2);
\coordinate (f) at (-1,-1);
\coordinate (h) at (2,0);
\coordinate (i) at (1,1);
\coordinate (j) at (2,2);
\coordinate (k) at (-2.5,0);
\coordinate (l) at (-2,.2);
\coordinate (m) at (-2,-.2);
\draw (a) -- (c) -- (d);
\draw (b) -- (e) -- (f) -- (j);
\draw (h) -- (k);
\draw (h) -- (i);
\draw (l) -- (m);
\filldraw[fill=white, draw=black] (j) circle (.7);
\end{scope}

\end{tikzpicture}}
\vspace{.5cm}
\\
X_4:&
\raisebox{-0.5\height}{\begin{tikzpicture}
\node at (0,0) [vertex, label=left:{$x_4$}] {};
\node at (2,2) [vertex, label=right:{$x_5$}] {};
\node at (1,2) [vertex, label=left:{$x_6$}] {};
\node at (1,1) [vertex, label=left:{$x_7$}] {};
\node at (2,1) [vertex, label=right:{$x_8$}] {};
\node at (0,0) [vertex] {};
\foreach \x in {1,2} {
 \foreach \y in {1,2} {
  \node at (\x,\y) [vertex] {};
 }
}
\draw (1,1) grid (2,2);
\draw (0,0) -- (2,2);
\draw (0,0) -- (1,2);
\draw (0,0) -- (2,1);
\end{tikzpicture}}
&  X_5:& 
\raisebox{-0.5\height}{\begin{tikzpicture}
\node at (2,2) [vertex, label=right:{$x_5$}] {};
\node at (1,2) [vertex, label=left:{$x_6$}] {};
\node at (1,1) [vertex, label=left:{$x_7$}] {};
\node at (2,1) [vertex, label=right:{$x_8$}] {};
\foreach \x in {1,2} {
 \foreach \y in {1,2} {
  \node at (\x,\y) [vertex] {};
 }
}
\draw (1,1) grid (2,2);
\draw (1,1) -- (2,2);
\end{tikzpicture}}
& X_6:&
\raisebox{-0.5\height}{\begin{tikzpicture}
\node at (0,1) [vertex, label=left:{$x_6$}] {};
\node at (0,0) [vertex, label=left:{$x_7$}] {};
\node at (1,0) [vertex, label=right:{$x_8$}] {};
\node at (0,0) [vertex] {};
\node at (1,0) [vertex] {};
\node at (0,1) [vertex] {};
\draw (0,0) -- (0,1);
\draw (1,0) -- (0,0);
\end{tikzpicture}}
\vspace{.5cm}
\\ X_7:&
\raisebox{-0.5\height}{\begin{tikzpicture}
\node at (0,0) [vertex, label=left:{$x_7$}] {};
\node at (1,0) [vertex, label=right:{$x_8$}] {};
\node at (0,0) [vertex] {};
\node at (1,0) [vertex] {};
\draw (1,0) -- (0,0);
\end{tikzpicture}}
& X_8:&
\raisebox{-0.5\height}{\begin{tikzpicture}
\node at (0,0) [vertex, label=right:{$x_8$}] {};
\node at (0,0) [vertex] {};
\end{tikzpicture}}
\end{array}
$}
\caption{Various $X_i$ for Example \ref{pointedexl}.\label{Xifig}}
\end{figure}

To show that $(X,x_1)$ is not pointed strong homotopy equivalent to a point, it suffices to show that there is no map strongly homotopic in one step to $\id_X$ other than $\id_X$ itself. For the sake of contradiction, assume there is some continuous $f$ strongly homotopic in one step to $\id_X$. By inspecting the adjacencies in Figure \ref{pointedfig} we see that $x_1$ is the only point in $X$ for which we can have $f(x) \adj x$ while still preserving all necessary adjacencies (we must have $y\adjeq f(x)$ for every $y\adj x$). Thus any map strongly homotopic in one step to $\id_X$ must have $f(x_1)\neq x_1$.

Since any map strongly homotopic to $\id_X$ must move $x_1$, any strong homotopy from $\id_X$ to a constant must move $x_1$ at one of its stages, and so it cannot be pointed. Thus we have shown that $(X,x_1)$ is not pointed strong homotopy equivalent to a point. 
\end{exl}

Thus, in general, if $X$ and $Y$ are strong homotopy equivalent and $x_0\in X$ and $y_0\in Y$, it does not automatically follow that $(X,x_0)$ is pointed strong homotopy equivalent to $(Y,y_0)$. The example of \cite{hmps15} gave two images $X$ and $Y$ which are homotopy equivalent but not pointed equivalent for \emph{any} choice of points $x_0 \in X$ and $y_0\in Y$. We do not know if the same type of example is possible for strong homotopy.

\begin{quest}
Are there strong homotopy equivalent digital images $X$ and $Y$ for which $(X,x_0)$ and $(Y,y_0)$ are not strong pointed homotopy equivalent for all $x_0\in X$ and $y_0\in Y$?
\end{quest}


\begin{thebibliography}{1}

\bibitem{boxe99}
Laurence Boxer.
\newblock A classical construction for the digital fundamental group.
\newblock {\em J. Math. Imaging Vision}, 10(1):51--62, 1999.

\bibitem{boxe05}
Laurence Boxer.
\newblock Properties of digital homotopy.
\newblock {\em J. Math. Imaging Vision}, 22(1):19--26, 2005.

\bibitem{boxe17}
Laurence Boxer.
\newblock Generalized normal product adjacency in digital topology.
\newblock {\em Appl. Gen. Topol.}, 18(2):401--427, 2017.

\bibitem{bko11}
Laurence Boxer, Ismet Karaca, and Ahmet \"{O}ztel.
\newblock Topological invariants in digital images.
\newblock {\em J. Math. Sci. Adv. Appl.}, 11(2):109--140, 2011.

\bibitem{bs19}
Laurence Boxer and P.~Christopher Staecker.
\newblock Fixed point sets in digital topology, 1, 2019, arXiv:1901.11093.

\bibitem{hmps15}
Jason Haarmann, Meg~P. Murphy, Casey~S. Peters, and P.~Christopher Staecker.
\newblock Homotopy equivalence in finite digital images.
\newblock {\em J. Math. Imaging Vision}, 53(3):288--302, 2015.

\bibitem{han05}
Sang-Eon Han.
\newblock Non-product property of the digital fundamental group.
\newblock {\em Inform. Sci.}, 171(1-3):73--91, 2005.

\bibitem{hatc02}
Allen Hatcher.
\newblock {\em Algebraic topology}.
\newblock Cambridge University Press, Cambridge, 2002.

\bibitem{stae15}
P.~Christopher Staecker.
\newblock Some enumerations of binary digital images, 2015, arXiv:1502.06236.

\end{thebibliography}
\bibliographystyle{hplain}

\end{document}